\newtheorem{theorem}{Theorem}[section]
\newtheorem{lemma}[theorem]{Lemma}
\newtheorem{proposition}[theorem]{Proposition}
\newtheorem{corollary}[theorem]{Corollary}
\theoremstyle{definition}
\newtheorem{definition}[theorem]{Definition}
\newtheorem{example}[theorem]{Example}
\newtheorem{remark}[theorem]{Remark}
\numberwithin{equation}{section}
\begin{document}

\title[Attached prime ideals over skew Ore polynomials]{Attached prime ideals over skew Ore polynomials}

%    Remove any unused author tags.

%    author one information

\author{Sebasti\'an Higuera}
\address{Universidad Nacional de Colombia - Sede Bogot\'a}
\curraddr{Campus Universitario}
\email{sdhiguerar@unal.edu.co}
\thanks{}

\author{Armando Reyes}
\address{Universidad Nacional de Colombia - Sede Bogot\'a}
\curraddr{Campus Universitario}
\email{mareyesv@unal.edu.co}
\thanks{}

%    author two information

\thanks{The authors were supported by the research fund of Department of Mathematics, Faculty of Science, Universidad Nacional de Colombia - Sede Bogot\'a, Colombia, HERMES CODE 53880.}

\subjclass[2020]{16D10, 16D60, 16D80, 16E45, 16S36, 16S85, 16W50.}

\keywords{Attached prime ideal, inverse polynomial module, skew polynomial ring, skew Ore polynomials, Bass module.}

\date{}

\dedicatory{Dedicated to Martha Rinc\'on}

\begin{abstract}

In this paper, we investigate the attached prime ideals of inverse polynomial modules over skew Ore polynomials.
\end{abstract}

\maketitle

%\tableofcontents

\section{Introduction}\label{ch0}

Throughout the paper, every ring $R$ is associative (not necessarily commutative) with identity. If $R$ is commutative, then it is denoted by $K$. If $N_R$ is a right module, then the {\em right annihilator} of $N_R$ is defined by ${\rm ann}_R(N)=\left \{r \in R\mid Nr = 0\right \}$, and $N_R$ is said to be {\em prime} if $N_R \neq 0$ and ${\rm ann}_R(N) = {\rm ann}_R(N')$ for all submodule $N_R'$ of $N_R$. If $M_R$ is a right module, then a right prime ideal $P$ of $R$ is called {\em associated} of $M_R$ if there exists a prime submodule $N_R$ of $M_R$ such that $P = {\rm ann}_R(N)$. The set of all associated prime ideals of $M_R$ is denoted by ${\rm Ass}(M_R)$ \cite[p. 86]{Lam1998}. These ideals have been widely studied in the literature. For instance, Brewer and Heinzer \cite{BrewerHeinzer1974} showed that the associated prime ideals of the commutative polynomial ring $K[x]$ are all extended, that is, every $P \in {\rm Ass}(K[x]_{K[x]})$ may be expressed as $P = Q[x]$, where $Q \in {\rm Ass}(K_K)$ (see also Faith \cite{Faith2000}). Annin \cite{AnninPhD2002, Annin2002, Annin2004} extended this result to the setting of skew polynomial rings in the sense of Ore \cite{Ore1931, Ore1933}, while Nordstrom \cite{Nordstrom2005, Nordstrom2012} computed the associated prime ideals of simple torsion modules over generalized Weyl algebras defined by Bavula \cite{Bavula1992}. Later, Ouyang and Birkenmeier \cite{OuyangBirkenmeier2012} defined the {\em nilpotent associated primes} as a generalization of the associated prime ideals and described these ideals over skew polynomial rings. In the setting of the {skew PBW extensions} introduced by Gallego and Lezama \cite{GallegoLezama2011}, Ni\~no et al. \cite{NinoRamirezReyes2020} characterized the associated primes of modules over these rings, Later, Higuera et al. \cite{HigueraRamirezReyes2024, HigueraReyes2023} studied the nilpotent associated prime ideals of a skew PBW extension and investigated the associated prime ideals of induced modules over this kind of noncommutative rings.

Given the importance of primary decomposition theory and its relationships with associated prime ideals, Macdonald \cite{Macdonald1973} considered a dual theory to the primary decomposition which is commonly referred as {\em secondary representation} where the main ideals of this theory are called {\it attached primes}. According to Baig \cite{Baig2009}, $M_K$ is called a {\it secondary module} if $M_K \neq 0$ and the endomorphism $\phi_r$ of $M_K$ defined by $\phi_r(m):= mr$ for all $m \in M_K$ is either surjective or nilpotent (that is, there exists $k \in \mathbb{N}$ such that $\phi_r^k=0$), for each $r \in K$ \cite[Definition 3.1.1]{Baig2009}. Secondary modules are also called $P$-{\em secondary} since their nilradical is a prime ideal $P$ of $K$ \cite[Claim 3.1.2]{Baig2009}. If $M_K$ has a secondary representation, that is, $M_K = \sum_{i=0}^n M_i$ where each $M_i$ is secondary, then $M_K$ is called {\em representable}. If $M_i$ is $P_i$-secondary for every $1 \le i \le n$ with $P_i \neq P_j$ for $i \neq j$, and the sums $\sum_{i\neq k} M_i$ are proper submodules of $M_K$, then the representation is called {\em minimal} \cite[Definition 3.1.9]{Baig2009}. The prime ideals $P_1,\dotsc, P_n$ are called {\em attached} of $M_K$ and the set of all attached prime ideals of $M_K$ is denoted by ${\rm Att}^{*}(M_K)$ \cite[Definition 3.2.2]{Baig2009}.

Melkersson \cite{Melkersson1998} studied the attached prime ideals over commutative polynomial extensions. He investigated when multiplication by $f(x) \in K[x]$ defines a surjective endomorphism over the module $M[x^{-1}]_{K[x]}$ which consists of all the polynomials of the form $m(x)=m_0 + m_1x^{-1}+ \cdots + m_kx^{-k}$ with $m_i \in M_K$ for all $0 \le i\le k$. He showed that for $g$ and $h$ endomorphisms of $M_K$ such that $gh = hg$, if $g$ is surjective and $h$ is nilpotent then $f:= g + h$ is a surjective endomorphism \cite[Lemma 2.1]{Melkersson1998}, and with this result, he proved that if $M_K$ is Artinian or has a secondary representation, then $M = c(f)M$ where $c(f)$ is the ideal generated by the coefficients of $f(x)$. As a corollary, he obtained a characterization of the attached prime ideals of the right module $M[x^{-1}]_{K[x]}$ by showing that if $Q \in {\rm Att}^{*}(M[x^{-1}]_{K[x]})$ then we obtain that $Q = P[x]$, where $P \in {\rm Att}^{*}(M_K)$ \cite[Corollary 2.3]{Melkersson1998}. 

Annin \cite{Annin2008} introduced the concept of {\em attached prime ideal} for arbitrary modules (not necessarily representables), and provided an extension of Macdonald's theory of secondary representation to the noncommutative setting. Following Annin \cite{Annin2008}, $N_R$ is called {\it coprime} if $N_R \neq 0$ and ${\rm ann}_R(N) = {\rm ann}_R(Q)$ for all non-zero quotient module $Q_R$ of $N_R$ \cite[Definition 2.1]{Annin2008}. A right prime ideal $P$ of $R$ is called {\it attached} of $M_R$ if there exists a coprime quotient module $Q_R$ of $M_R$ such that $P = {\rm ann}_R(Q)$. The set of attached prime ideals of $M_R$ is denoted by ${\rm Att}(M_R)$ \cite[Definition 2.3]{Annin2008}. Annin \cite{Annin2011} defined 
the {\em completely $\sigma$-compatible modules} and extended Melkersson's result to the noncommutative setting. He proved that if $Q \in {\rm Att}(M[x^{-1}]_{S})$ then $Q = P[x]$ where $P \in {\rm Att}(M_R)$ and $S$ is the {skew polynomial ring} $R[x;\sigma]$ with $\sigma$ an automorphism of $R$ \cite[Theorem 3.2]{Annin2011}.

Cohn \cite{Cohn1961} introduced the {\em skew Ore polynomials of higher order} as a generalization of the skew polynomial rings considering the relation $xr := \Psi_1(r)x + \Psi_2(r)x^2 + \dotsb$ for all $r \in R$, where the $\Psi$'s are endomorphisms of $R$. Following Cohn's ideas, Smits \cite{Smits1968} introduced the ring of skew Ore polynomials of higher order over a division ring $D$ and commutation rule defined by
         \begin{align}
           xr:= r_1x + \cdots + r_kx^k,\ \text{for all}\ r\in R\ \text{and}\ k \ge 1.\label{eq:smits}  
         \end{align}
The relation (\ref{eq:smits}) induces a family of endomorphisms $\delta_1, \ldots, \delta_k$ of the group $(D,+)$ with $\delta_i(r):=r_i$ for every $1\le i\le k$ \cite[p. 211]{Smits1968}. Smits proved that if $\{\delta_2, \ldots, \delta_k \}$ is a set of left $D$-independient endomorphisms (i.e., if $c_2\delta_2(r)+ \cdots + c_k\delta_k(r)=0$ for all $r \in D$ then $c_i=0$ for all $2 \le i \le k$ \cite[p. 212]{Smits1968}), then $\delta_1$ is a injective endomorphism \cite[p. 213]{Smits1968}. There exist some algebras such as Clifford algebras, Weyl-Heisenberg algebras, and Sklyanin algebras, in which this commutation relation is not sufficient to define the noncommutative structure of the algebras since a free non-zero term $\Psi_0$ is required. Maksimov \cite{Maksimov2000} considered the skew Ore polynomials of higher order with free non-zero term $\Psi_0(r)$ where $\Psi_0$ satisfies the relation $\Psi_0(rs) = \Psi_0(r)s + \Psi_1(r)\Psi_0(s) + \Psi_2(r)\Psi_0^{2}(s) + \dotsb$, for every $r, s \in R$. Later, Golovashkin and Maksimov \cite{Golovashkinetal2005} introduced the algebras $Q(a,b,c)$ over a field $\Bbbk$ of characteristic zero with two generators $x$ and $y$, and generated by the quadratic relations $yx = ax^2 + bxy + cy^2$, where $a,b,c \in \Bbbk$. If $\{x^my^n\}$ forms a basis for $Q(a,b,c)$, then the ring generated by the quadratic relation is an algebra of skew Ore polynomials and can be defined by a system of linear mappings $\delta_0,\ldots,\delta_k$ of $\Bbbk[x]$ into itself such that for any $p(x) \in \Bbbk[x]$, $yp(x) = \delta_0(p(x)) + \delta_1(p(x))y + \cdots + \delta_k(p(x))y^k$, for some $k \in \mathbb{N}$. 

Motivated by Annin's research \cite{Annin2011} about the attached prime ideals of $M[x^{-1}]_S$ and the importance of the algebras of skew Ore polynomials of higher order, in this paper we introduce a family of noncommutative rings called {\em skew Ore polynomials} and we study the attached prime ideals of the inverse polynomial module over these rings. Since some of its ring-theoretical, homological and combinatorial properties have been investigated recently (e.g., \cite{ChaconReyes2023, Ninoetal2024, NinoReyes2023} and references therein), this article can be considered as a contribution to research on skew Ore polynomials of higher order.

The paper is organized as follows. Section \ref{Definitions} establishes some preliminaries and key results about skew Ore polynomials. In Section \ref{Attachedprime}, we introduce the completely $(\sigma,\delta)$-compatible modules and present original results (Propositions \ref{Completelycompatiblemodules}, \ref{Propertycompletely} and \ref{completelycompatible}). Under compatibility conditions, we also characterize the attached prime ideals of the right module $M[x^{-1}]_A$ where $A$ is a skew Ore polynomial ring (Theorems \ref{AnninTheorem2.1} and \ref{AnninTheorem3.2}). As expected, our results extend those above corresponding to skew polynomial rings of automorphism type presented by Annin \cite{Annin2008, Annin2011}. Finally, we present some ideas for a future work.

The symbols $\mathbb{N}$, $\mathbb{Z}$, $\mathbb{R}$, and $\mathbb{C}$ denote the set of natural numbers including zero, the ring of integer numbers, the fields of real numbers and the complex numbers, respectively. The term module will
 always mean right module unless stated otherwise. The symbol $\Bbbk$ denotes a field and $\Bbbk^{*} := \Bbbk\ \backslash\ \{0\}$.

\section{Preliminaries}\label{Definitions}

If $\sigma$ is an endomorphism of $R$, then a map $\delta : R \rightarrow R$ is called a {\em $\sigma$-derivation} of $R$ if it is additive and satisfies that $\displaystyle \delta(r s) = \sigma(r )\delta(s)+\delta(r )s$, for every $r,s \in R$ \cite[p. 26]{GoodearlWarfield2004}. Following Ore \cite{Ore1931, Ore1933}, the {\em skew polynomial ring} (also called {\em Ore extension} of $R$) over $R$ is defined as the ring $R[x;\sigma,\delta]$ generated by $R$ and $x$ such that it is a free left $R$-module with basis $\left \{x^k\ | \ k \in \mathbb{N} \right \}$ and $xr := \sigma(r)x + \delta(r)$ for every $r \in R$ \cite[p. 34]{GoodearlWarfield2004}.  

A derivation $\delta$ of $R$ is called {\em locally nilpotent} if for all $r \in R$ there exists $n(r)\ge 1$ such that $\delta^{n(r)}(r) = 0$ \cite[p. 11]{Freudenburg2006}. Following the ideas of Cohn \cite{Cohn1961} and Smits \cite{Smits1968}, we introduce the following kind of skew Ore polynomials of higher order.

\begin{definition}\label{importantdefinition}
    If $R$ is a ring, $\sigma$ is an automorphism of $R$ and $\delta$ is a locally nilpotent $\sigma$-derivation of $R$, then we define the {\em skew Ore polynomial ring} $A:=R(x; \sigma,\delta)$ which consists of the uniquely representable elements $r_0 + r_1x + \cdots + r_kx^k$ where $r_i \in R$ and $k \in \mathbb{N}$, with the commutation rule $xr:= \sigma(r)x + x\delta(r)x$ for all $r \in R$.
\end{definition}

According to Definition (\ref{importantdefinition}), if $r\in R$ and $\delta^{n(r)}(r) = 0$ for some $n(r)\ge 1$, then
\begin{align}
    xr= \sigma(r)x + \sigma\delta(r)x^2 + \cdots + \sigma\delta^{n(r)-1}(r)x^{n(r)}.\label{eq:our}
\end{align}
If we define the endomorphisms $\Psi_i:=\sigma\delta^{i-1}$ for all $i\ge 1$ and $\Psi_0:=0$, then $A$ is a skew Ore polynomial of higher order in the sense of Cohn \cite{Cohn1961}.   

\begin{example}\label{ExampleskewOre}
We present some examples of skew Ore polynomial rings.
    \begin{enumerate}
        \item If $\delta=0$ then $xr=\sigma(r)x$ and thus $R(x;\sigma)=R[x;\sigma]$ is the skew polynomial ring where $\sigma$ is an automorphism of $R$.
        \item The {\em quantum plane} $\Bbbk_q[x,y]$ is the free algebra generated by $x, y$ over $\Bbbk$, and subject to the commutation rule $xy=qyx$ with $q \in \Bbbk^*$ and $q\neq 1$. We note that $\Bbbk_q[x,y] \cong \Bbbk[y](x;\sigma)$ where $\sigma(y):=qy$ is an automorphism of $\Bbbk[y]$.
        \item The {\em Jordan plane} $\mathcal{J}(\Bbbk)$ defined by Jordan \cite{Jordan2001} is the free algebra generated by the indeterminates $x, y$ over $\Bbbk$ and the relation $yx = xy + y^2$. This algebra can be a written as the skew polynomial ring $\Bbbk[y][x;\delta]$ with $\delta(y):=-y^2$. On the other hand, notice that $\delta(x)= 1$ is a locally nilpotent derivation of $\Bbbk[x]$, and thus the Jordan plane also can be interpreted as $\Bbbk[x](y;\delta)$.
       % \item The {\em $q$-skew Jordan plane} $\mathcal{J}_q(\Bbbk)$ is defined as the free algebra generated by $x, y$ over $\Bbbk$ and $yx = qxy + y^2$, for any $q \in \Bbbk^{*}$. This algebra can be expressed as the skew polynomial ring $\Bbbk[y][x;\sigma, \delta]$ with $\sigma(y):=qy$ and $\delta(y):=y^2$. Similar to $\mathcal{J}(\Bbbk)$, it is not hard to see that $\mathcal{J}_q(\Bbbk)$ can be seen as $\Bbbk[x](y;\sigma,\delta)$.
        \item D\'iaz and Pariguan \cite{DiazPariguan2009} introduced the {\em $q$-meromorphic Weyl algebra} $MW_q$ as the algebra generated by $x,y$ over $\mathbb{C}$, and defining relation $yx=qxy + x^2$, for $0 < q < 1$. Lopes \cite{Lopes2023} showed that using the generator $Y =y+(q-1)^{-1}x$ instead of $y$, it follows that $Yx = qxY$ and thus the algebra $MW_q$ can be written as a quantum plane $\mathbb{C}_q[x,y]$ \cite[Example 3.1]{Lopes2023}. Following examples (2) and (4), we conclude the algebra $MW_q$ is a skew Ore polynomial ring.
        \item Consider the algebra $Q(0,b,c)$ defined by Golovashkin and Maksimov \cite{Golovashkinetal2005} with $a=0$. It is straightforward to see that $\sigma(x)=bx$ is an automorphism of $\Bbbk[x]$ with $b \neq 0$, $\delta(x)=c$ is a locally nilpotent $\sigma$-derivation of $\Bbbk[x]$ and so $Q(0,b,c)$ can be interpreted as $A=\Bbbk[x](y;\sigma,\delta)$.
        \item If $\delta_1$ is an automorphism of $D$ and $\{\delta_2, \ldots, \delta_k \}$ is a set of left $D$-independient endomorphism, then $\delta:=\delta_1^{-1} \delta_2$ is a $\delta_1$-derivation of $D$, $\delta_{i+1}(r)= \delta_1\delta^{i}(r)$, and $\delta^{k}(r)=0$ for all $r \in D$ \cite[p. 214]{Smits1968}, and thus (\ref{eq:smits}) coincides with (\ref{eq:our}). In this way, the skew Ore polynomial rings of higher order defined by Smits can be seen as $D(x;\delta_1,\delta)$.
    \end{enumerate}
\end{example}

We need to investigate the localization technique to define the module $M[x^{-1}]_A$ (Section \ref{Attached}). In the localization of noncommutative rings the {\em Ore condition} plays an important role. A multiplicative subset $X$ of $R$ satisfies the {\em left Ore condition} if $Xr \cap Rx \neq \emptyset$ for every $r \in R$ and $x \in X$. If $X$ satisfies the left Ore condition then $X$ is called a {\em left Ore set}. Proposition \ref{Orecondition} shows that the set containing all powers of $x$ satisfies the left Ore condition.

\begin{proposition}\label{Orecondition}
    $X=\{x^k\ | \ k \ge 0 \}$ is a left Ore set of the algebra $A$.
\end{proposition}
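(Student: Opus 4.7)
The plan is to verify the left Ore condition directly: given $a \in A$ and $x^k \in X$, I need to produce $x^m \in X$ and $b \in A$ with $x^m a = b x^k$. My strategy is to take $m = k$ and show that $x^k a$ always lies in the left ideal $A x^k$. This reduces the proposition to a statement about how left multiplication by a power of $x$ acts on elements of $R$, after which the case of a general $a \in A$ follows by linearity and absorbing the extra powers $x^i$ into the left factor.

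The key lemma, which I would prove by induction on $m \ge 0$, is the following: for every $r \in R$, the product $x^m r \in A$ can be written as a finite sum $\sum_{j \ge m} s_j x^j$ with $s_j \in R$, i.e., every monomial appearing in $x^m r$ has $x$-degree at least $m$. The base case $m = 0$ is immediate. For the inductive step, I apply the commutation rule (\ref{eq:our}), namely
\[
x s = \sigma(s) x + \sigma\delta(s) x^2 + \cdots + \sigma\delta^{n(s) - 1}(s) x^{n(s)},
\]
which is a finite sum because $\delta$ is locally nilpotent. Writing $x^{m+1} r = x (x^m r) = \sum_{j \ge m} (x s_j) x^j$ and expanding each $x s_j$ by the rule above, every resulting monomial has $x$-degree at least $1 + m$.

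To finish, I write $a = \sum_{i=0}^{n} r_i x^i$ with $r_i \in R$ and compute $x^k a = \sum_{i=0}^{n} x^k r_i x^i$. By the lemma, each $x^k r_i$ has all terms of $x$-degree $\ge k$, so it factors as $c_i x^k$ for some $c_i \in A$; consequently $x^k r_i x^i = c_i x^{k+i} = (c_i x^i) x^k \in A x^k$. Summing over $i$ yields $x^k a = b x^k$ for $b = \sum_{i=0}^n c_i x^i \in A$, establishing the Ore condition with $s' = x^k$.

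The main obstacle is the inductive step of the lemma. The argument crucially exploits the fact that the commutation relation (\ref{eq:our}) has no free term (that is, $\Psi_0 = 0$ in the Cohn--Smits description), so that left multiplication by $x$ strictly raises the minimum $x$-degree; local nilpotency of $\delta$ is indispensable to guarantee that the expansions of $x s_j$ terminate and remain inside $A$. Without either of these two hypotheses the induction would break down, producing either degree-zero remnants or formal infinite series outside the ring.
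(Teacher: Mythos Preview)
Your proof is correct and follows essentially the same approach as the paper: both exploit the absence of a degree-zero term in the commutation rule \eqref{eq:our} to show that left multiplication by $x$ raises the minimum $x$-degree, and then induct to handle higher powers. Your induction (establishing $x^m r \in A x^m$ directly for $r\in R$) is in fact slightly cleaner than the paper's, which proves $x^p a \in Ax$ and leaves the passage from $Ax$ to $Ax^k$ implicit.
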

\begin{proof}
    It is clear that $X$ is a multiplicative subset of $A$, so we have to show that $X$ satisfies the left Ore condition. Let $a= r_0 + r_1x + \cdots + r_kx^k$ be an element of $A$ with $r_k \neq 0$. Since $\delta$ is locally nilpotent, for each $r_i$ in the expression of $a$ there exists $m_i \ge 0$ such that
    \[ xr_i := \sum_{j=1}^{m_i} \sigma(\delta^{j-1}(r_i))x^{j}=a_ix,\] 
    where $a_i:= \sigma(r_i) + \sigma(\delta(r_i))x+ \cdots + \sigma(\delta^{m_i - 1}(r_i))x^{m_i -1} \in A$. In this way, for each $r_i$ there exists $a_i \in A$ such that $xr_i = a_ix$ for some $a_i \in A$, and so $xa = a'x$ for some $a' \in A$. By induction on $p$, assume that for any $a \in A$ and $x^p \in X$ there exists $\overline{a}\in A$ such that $x^{p}a = \overline{a} x$. Thus, $x^{p+1}a=x\overline{a}x$ and since $xa = \overline{a}'x$ for some $\overline{a}' \in A$, we get $x^{p+1}a= a''x$ with $a''= \overline{a}'x \in A$. Hence, $X$ is a left Ore set of $A$. 
\end{proof}
 
%It is not difficult to see that the set $S=\{x^k\ | \ k \ge 0 \}$ satisfies the Ore condition, and 
 By Proposition \ref{Orecondition}, we can localize $A$ by $X$, and so we denote this localization by $X^{-1}A$. It is straightforward to see that the indeterminate $x^{-1}$ satisfies the relation $x^{-1}r:= \sigma'(r)x^{-1} + \delta'(r)$, for all $r \in R$ with $\sigma'(r):=\sigma^{-1}(r)$ and $\delta'(r):=-\delta\sigma^{-1}(r)$. Dumas \cite{Dumas1991} studied the field of fractions of $D[x;\sigma,\delta]$ where $\sigma$ is an automorphism of $D$ and stated that one technique for this purpose is to consider it as a subfield of a certain field of series \cite[p. 193]{Dumas1991}. According to Dumas, if $Q$ is the field of fractions of $D[x;\sigma,\delta]$ then $Q$ is a subfield of the {\em field of series of Laurent} $D((x^{-1};\sigma^{-1}, -\delta\sigma^{-1}))$ whose elements are of the form $r_{-k}x^{-k}+\cdots + r_{-1}x^{-1} + r_0 + r_1x+ \cdots$ for some $k \in \mathbb{N}$, and satisfies the commutation rule
\begin{align*}
    xr&:= \sigma(r)x + \sigma\delta(r)x^2 + \cdots = \sigma(r)x +x\delta(r)x,\ \text{and}\\
     x^{-1}r&:= \sigma'(r)x^{-1} + \delta'(r),\ \text{for all}\ r\in R.
\end{align*}
By Definition \ref{importantdefinition} and Proposition \ref{Orecondition}, if $\sigma$ is an automorphism of $D$ and $\delta$ is a locally nilpotent $\sigma$-derivation of $D$ then $X^{-1}A \subseteq D((x^{-1};\sigma^{-1}, -\delta\sigma^{-1}))$ (see \cite{AlevDumas1995, DumasMartin2023, Dumas1992} for more details about fields of series  of Laurent).

\begin{remark}
    Following Lam et al. \cite[p. 2468]{Lametal1997}, if $\sigma$ is an automorphism of $R$ and $\delta$ is a $\sigma$-derivation of $R$, then we denote by $f_j^i$ the endomorphism of $R$ which is the sum of all possible words in $\sigma',\delta'$ built with $i$ letters $\sigma'$ and $j-i$ letters $\delta'$, for $i \leq j$. In particular, $f_0^0 = 1$, $f_j^j = \sigma'^{j}$, $f_j^0 = \delta'^j$, and $f_j^{j-1} = \sigma'^{j-1}\delta' +\sigma'^{j-2}\delta'\sigma' +\cdots+\delta'\sigma'^{j-1}$; if $\delta\sigma=\sigma\delta$, then $f_j^{i}= \binom{j}{i} \sigma'^i\delta'^{j-i}$. If $r \in R$ and $k \in\mathbb{N}$, then the following formula holds:
\begin{align}
    \displaystyle x^{-k}r =\sum_{i=0}^kf_{k}^{i}(r)x^{-i}.\label{relacion2}
\end{align}

In addition, if $r,s \in R$ and $k, k' \in \mathbb{N}$ then
\begin{align}
    \displaystyle (rx^{-k})(sx^{-k'}) = \sum_{i=0}^{k}rf_k^i(s)x^{-(k+k')}. \label{relacion3}
\end{align}
\end{remark}

Taking into account the usual addition of polynomials and the product induced by (\ref{relacion2}) and (\ref{relacion3}), we define the ring of polynomials in the indeterminate $x^{-1}$ with coefficients in $R$ and denote it by $R[x^{-1}]$. If $M_R$ is a right module then the {\em inverse polynomial module} $M[x^{-1}]_R$ is defined as the set of all polynomials of the form $f(x)=m_0 + \cdots + m_kx^{-k}$ with $m_i \in M_R$ for all $i$, the usual addition of polynomials and the action of $R$ 
 over any monomial $mx^{-k}$ is defined by (\ref{relacion2}) as follows:
\begin{align}
    \displaystyle mx^{-k}r :=\sum_{i=0}^kmf_{k}^{i}(r)x^{-i},\ \text{for all}\ m\in M_R\ \text{and}\ r\in R. 
\end{align}

\begin{remark}
We use expressions as $m(x)=m_0 + m_1x^{-1} + \cdots +m_kx^{-k}\in M[x^{-1}]_R$. With this notation, we define the \textit{leading monomial} of $m(x)$ as ${\rm lm}(m(x)):=x^{-k}$, the \textit{leading coefficient} of $m(x)$ by ${\rm lc}(m(x)):=m_k$, and the \textit{leading term} of $m(x)$ as ${\rm lt}(m(x)):=m_kx^{-k}$. The {\em negative degree} of $x^{-k}$ is defined by $\deg(x^{-k}):= -k$ for any $k \in \mathbb{N}$, and $\deg(m(x)):={\rm max}\{\deg(x^{-i})\}_{i=0}^k$ for all $m(x)\in M[x^{-1}]_R$. For any element $m(x) \in M[x^{-1}]_R$, we denote by $C_{m}$ the set of all coefficients of $m(x)$.
\end{remark}
%\begin{example}
 %   \begin{enumerate}
  %      \item If $\delta=0$, then $M[x^{-1}]_R$ is a right module over $R$ with the product induced by $mx^{-1}r:=m\sigma^{-1}(r)x^{-1}$, for all $r \in R$ and $m \in M_R$. 
   %     \item Consider the Jordan plane generated by $x, y$ and the relation $xy = yx - x^2$. It is straightforward see that $M[x^{-1}]_{\Bbbk[y]}$ is a right module over $\Bbbk[y]$ with the action defined by $mx^{-1}y:=myx^{-1} -m$, for all $m \in M_R$.
        %\item If $Q(0,b,c)$ is the algebra generated by $x,y$ and the relation $yx=bxy + cy^2$, for $b,c\in \Bbbk^{*}$, then $M[y^{-1}]_{\Bbbk[x]}$ is a right module over $\Bbbk[x]$ under the action given by $my^{-1}x:=mb^{-1}xy^{-1} -mb^{-1}c$, for all $r \in R$ and $m \in M_R$.
  %  \end{enumerate}
%\end{example}
%%%%%%%%%%%%%%%%%%%%%%%%%%%%%%%%%%%%%%%%

\section{Completely compatible rings and attached prime ideals}\label{Attachedprime}

\subsection{Completely $(\sigma,\delta)$-compatible rings} Annin \cite{Annin2004} (c.f. Hashemi and Moussavi \cite{HashemiMoussavi2005}) introduced the notion of compatibility with the aim of studying the associated prime ideals of modules over skew polynomial rings. If $\sigma$ is an endomorphism of $R$ and $\delta$ is a $\sigma$-derivation of $R$, then $M_R$ is called $\sigma$-{\em compatible} if for each $m\in M_R$ and $r\in R$, $mr = 0$ if and only if $m\sigma(r)=0$; $M_R$ is $\delta$-{\em compatible} if for each $m \in M_R$ and $r\in R$, $mr = 0$ implies $m\delta(r)=0$; if $M_R$ is both $\sigma$-compatible and $\delta$-compatible, then $M_R$ is a ($\sigma,\delta$)-{\em compatible module} \cite[Definition 2.1]{Annin2004}. Continuing with the compatibility conditions on modules, $M_R$ is called {\it completely $\sigma$-compatible} if $(M/N)_R$ is $\sigma$-compatible, for every submodule $N_R$ of $M_R$ \cite[Definition 1.4]{Annin2011}. Since completely $\sigma$-compatible modules were used by Annin \cite{Annin2011} in their research about attached prime ideals of $M[x^{-1}]_S$, it is to be expected that we have to consider the {\em completely $(\sigma,\delta)$-compatible modules} to characterize this type of ideals in the setting of the skew Ore polynomials. In this way, we consider the following definition.

\begin{definition}
If $\sigma$ is an endomorphism of $R$ and $\delta$ is a $\sigma$-derivation of $R$ then $M_R$ is called {\em completely $\sigma$-compatible} if for all submodule $N_R$ of $M_R$, $(M/N)_R$ is a $\sigma$-compatible module; $M_R$ is {\em completely $\delta$-compatible} if for every submodule $N_R$ of $M_R$, $(M/N)_R$ is $\delta$-compatible; $M_R$ is said to be {\em completely $(\sigma,\delta)$-compatible} if it is both completely $\sigma$-compatible and $\delta$-compatible.
\end{definition}

\begin{example}
\begin{itemize}
    \item[{\rm (1)}] If $M_R$ is simple and $(\sigma,\delta)$-compatible, then it is not difficult to see that $M_R$ is completely $(\sigma,\delta)$-compatible. 
    \item[{\rm (2)}] Let $K$ be a local ring with maximal ideal $\mathfrak{m}$ and $\sigma$ any automorphism of $K$. Annin \cite{AnninPhD2002} proved that $M_K := K/\mathfrak{m}$ is a $\sigma$-compatible module, and since $M_K$ is simple it follows that $M_K$ is completely $\sigma$-compatible \cite[Example 3.35]{AnninPhD2002}. Additionally, if $\delta$ is a $\sigma$-derivation of $K$ such that $\delta(r) \in \mathfrak{m}$ for every $r \in \mathfrak{m}$, then $M_K$ is completely $\delta$-compatible. Indeed, if $\overline{0} \neq \overline{s} \in M_K$ and $r \in K$ satisfy that $\overline{s}r = 0$ then $sr \in \mathfrak{m}$, and since $s \notin \mathfrak{m}$ we obtain $r \in \mathfrak{m}$. If $\delta(r) \in \mathfrak{m}$ for every $r \in \mathfrak{m}$, it follows that $s\delta(r) \in \mathfrak{m}$ and so $\overline{s}\delta(r) = 0$. Therefore, $M_K$ is $\delta$-compatible and thus $M_K$ is completely $\delta$-compatible. 
\end{itemize}
\end{example}

The following proposition presents some properties of completely $(\sigma,\delta)$-compatible modules. These properties are required to prove some results of the paper.

\begin{proposition}\label{Propertycompletely}
If $M_R$ is completely $(\sigma, \delta)$-compatible and $N_R$ is a submodule of $M_R$ then the following assertions hold:
\begin{enumerate}
    \item[{\rm (1)}] If $ma \in N_R$ then $m\sigma^i(a), m\delta^{j}(a) \in N_R$ for each $i,j \in \mathbb{N}$.
    \item[{\rm (2)}] If $mab \in N_R$ then $m\sigma(\delta^{j}(a))\delta(b), m\sigma^{i}(\delta(a))\delta^{j}(b) \in N_R$ for all $i,j \in \mathbb{N}$. In particular, $ma\delta^{j}(b), m\delta^{j}(a)b \in N_R$ for all $j \in \mathbb{N}$.
    \item[{\rm (3)}] If $mab \in N_R$ or $m\sigma(a)b \in N_R$ then $m\delta(a)b \in N_R$.
\end{enumerate}
\end{proposition}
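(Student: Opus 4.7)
The plan is to translate each assertion into a statement inside the quotient $M/N$ and then invoke the hypothesis that $(M/N)_R$ is $(\sigma,\delta)$-compatible. This gives two tools, applicable to any $m\in M$ and any $r\in R$: the $\sigma$-equivalence $mr\in N$ if and only if $m\sigma(r)\in N$ (an iff since $\sigma$ is an automorphism), and the $\delta$-implication $mr\in N$ implies $m\delta(r)\in N$. Both tools can be re-applied to a new ``module element'' obtained by absorbing part of $r$ into $m$ (for instance, treating $m\sigma(a)$ as the module element and $b$ as the ring element), a manoeuvre that will be used repeatedly.

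Part (1) is the direct application of these tools: starting from $ma\in N$, an induction on $i$ using the $\sigma$-equivalence produces $m\sigma^i(a)\in N$, while an induction on $j$ using the $\delta$-implication (applied at the $j$-th stage to $m\delta^{j-1}(a)\in N$) produces $m\delta^j(a)\in N$.

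For part (3), the key step is to obtain $m\delta(a)b\in N$ from $mab\in N$. First I would use the $\sigma$-equivalence twice: from $mab\in N$ get $m\sigma(ab)=m\sigma(a)\sigma(b)\in N$, and then, viewing $m\sigma(a)$ as the module element, get $m\sigma(a)b\in N$; the $\delta$-implication then yields $m\sigma(a)\delta(b)\in N$. On the other hand, the $\delta$-implication applied directly to $mab\in N$ gives
\[
m\delta(ab) \;=\; m\sigma(a)\delta(b)+m\delta(a)b \;\in\; N,
\]
and subtracting the previous membership shows $m\delta(a)b\in N$. The alternative hypothesis $m\sigma(a)b\in N$ is handled by first using the $\sigma$-equivalence (with $\sigma^{-1}$, which exists because $\sigma$ is an automorphism) to rewrite it as $ma\sigma^{-1}(b)\in N$, then applying the first case to get $m\delta(a)\sigma^{-1}(b)\in N$, and finally applying $\sigma$-equivalence once more on the module element $m\delta(a)$ to recover $m\delta(a)b\in N$.

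Part (2) is then assembled from (1) and (3). Iterating (3) by successively substituting $a,\delta(a),\delta^2(a),\dotsc$ yields $m\delta^j(a)b\in N$ for every $j\ge 0$, and applying (1) with module element $ma$ gives $ma\delta^j(b)\in N$; this is the ``in particular'' clause. To insert a $\sigma^i$ onto a $\delta$-power of $a$, I would re-use the absorption trick from (3): from $m\delta^j(a)b\in N$, the $\sigma$-equivalence produces $m\sigma(\delta^j(a))\sigma(b)\in N$ and then $m\sigma(\delta^j(a))b\in N$; iterating in $i$ yields $m\sigma^i(\delta^j(a))b\in N$, after which part (1) applied to the module element $m\sigma^i(\delta^j(a))$ allows any $\sigma^{i'}$ or $\delta^{j'}$ to be attached to the trailing $b$, covering both $m\sigma(\delta^j(a))\delta(b)\in N$ and $m\sigma^i(\delta(a))\delta^j(b)\in N$. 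The only genuinely non-routine step is the $\sigma$-absorption manoeuvre inside (3) that rewrites $m\sigma(a)\sigma(b)\in N$ as $m\sigma(a)b\in N$; everything else is disciplined bookkeeping.
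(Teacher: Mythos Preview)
Your argument is correct. The translation to the quotient $(M/N)_R$ is exactly what the paper does, and your three parts are sound; the only superfluous step is your separate handling of the hypothesis $m\sigma(a)b\in N$ in (3), since under $\sigma$-compatibility this is actually equivalent to $mab\in N$ (apply the iff to $ab$, then again to the ring element $b$ with module element $m\sigma(a)$), so no appeal to $\sigma^{-1}$ is needed.

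The difference from the paper is one of presentation rather than strategy. After observing that $(M/N)_R$ is $(\sigma,\delta)$-compatible, the paper simply invokes \cite[Lemma~2.15]{AlhevazMoussavi2012}, which records exactly these consequences of compatibility for a module, and stops there. You instead unpack that lemma in full: the Leibniz expansion $\delta(ab)=\sigma(a)\delta(b)+\delta(a)b$ together with the subtraction step in (3), and the absorption trick passing from $m\sigma(a)\sigma(b)\in N$ to $m\sigma(a)b\in N$. What your approach buys is a self-contained proof that does not rely on an external reference; what the paper's approach buys is brevity and a clean pointer to where these routine manipulations are already catalogued.
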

\begin{proof} If $M_R$ is completely $(\sigma,\delta)$-compatible, then $(M/N)_R$ is $(\sigma,\delta)$-compatible. Considering the elements $\overline{ma}=\overline{mab}=\overline{0} \in (M/N)_R$, the assertions follow from \cite[Lemma 2.15]{AlhevazMoussavi2012}.
\end{proof}

Annin proved some properties of completely $\sigma$-compatible modules \cite[p. 539]{Annin2011}. Proposition \ref{Completelycompatiblemodules} extends these statements for completely $(\sigma,\delta)$-compatible modules.

\begin{proposition}\label{Completelycompatiblemodules} Let $\sigma$ be an endomorphism of $R$ and $\delta$ a $\sigma$-derivation of $R$.
\begin{itemize}
    \item[\rm (1)] If $M_R$ is completely $(\sigma,\delta)$-compatible then $M_R$ is $(\sigma,\delta)$-compatible. 
    \item[\rm (2)] If $M_R$ is a completely $(\sigma, \delta)$-compatible module then $(M/N)_R$ is completely $(\sigma, \delta)$-compatible, for every submodule $N_R$ of $M_R$.
\end{itemize}
\end{proposition}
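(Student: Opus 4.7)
My plan is that both parts follow almost directly from the definition, essentially by choosing appropriate submodules and applying the correspondence theorem; there is no substantive obstacle to overcome.

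For part (1), I would simply specialize the defining condition of complete $(\sigma,\delta)$-compatibility to the zero submodule. That is, since $M_R$ is completely $(\sigma,\delta)$-compatible, $(M/N)_R$ is $(\sigma,\delta)$-compatible for every submodule $N_R$ of $M_R$. Taking $N = 0$ and identifying $M/0$ with $M$ via the canonical isomorphism, we conclude that $M_R$ itself is $(\sigma,\delta)$-compatible.

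For part (2), the key ingredient is the lattice isomorphism theorem for modules. Given a submodule $N_R$ of $M_R$, I would let $\overline{K}$ be an arbitrary submodule of $(M/N)_R$, and invoke the correspondence theorem to write $\overline{K} = L/N$ for a unique submodule $L_R$ of $M_R$ containing $N_R$. Then $(M/N)/\overline{K} = (M/N)/(L/N) \cong M/L$ as right $R$-modules. Since $M_R$ is completely $(\sigma,\delta)$-compatible, $(M/L)_R$ is $(\sigma,\delta)$-compatible, and $(\sigma,\delta)$-compatibility is preserved under module isomorphism (it is a condition purely about when $mr=0$ for elements $m$ and $r$, which is transported along any $R$-linear bijection). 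Hence $((M/N)/\overline{K})_R$ is $(\sigma,\delta)$-compatible for every submodule $\overline{K}$ of $(M/N)_R$, so $(M/N)_R$ is completely $(\sigma,\delta)$-compatible.

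Since neither argument involves anything beyond the definitions and the correspondence theorem, I do not anticipate any genuine difficulty; the only thing worth being careful about is distinguishing submodules of $M$ from submodules of $M/N$, which the correspondence theorem handles cleanly. The whole proof should take only a few lines.
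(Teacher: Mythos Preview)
Your proposal is correct and follows essentially the same argument as the paper: for (1) you both specialize to $N=\{0\}$, and for (2) you both invoke the correspondence/third isomorphism theorem to write an arbitrary submodule of $M/N$ as $L/N$ and use $(M/N)/(L/N)\cong M/L$. Your version is actually slightly cleaner in that you explicitly allow $L=N$ and note that $(\sigma,\delta)$-compatibility transfers along isomorphisms, points the paper leaves implicit.
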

\begin{proof}
\begin{itemize}
    \item[(1)] If $M_R$ is a completely $(\sigma,\delta)$-compatible module then $(M/N)_R$ is a $(\sigma,\delta)$-compatible module, for every submodule $N_R$ of $M_R$. In particular, $(M/\{0\})_R \cong M_R$ is $(\sigma,\delta)$-compatible for the submodule $\{0\}_R$ of $M_R$.
    \item[(2)] Suppose that $M_R$ is completely $(\sigma,\delta)$-compatible and consider a submodule $N'/N$ of $(M/N)_R$ where $N'_R$ is a submodule of $M_R$ with $N \subsetneq N'$. By the third isomorphism theorem for modules, $((M/N)/(N'/N))_R \cong (M/N')_R$, and since $M_R$ is completely $(\sigma,\delta)$-compatible, we have that $(M/N')_R$ is $(\sigma,\delta)$-compatible and thus $((M/N)/(N'/N))_R$ is $(\sigma,\delta)$-compatible, whence $(M/N)_R$ is a completely $(\sigma,\delta)$-compatible module.
\end{itemize}
\end{proof}

We present other important property of completely $(\sigma,\delta)$-compatible modules.

\begin{proposition}\label{completelycompatible} If $\sigma$ is bijective and $M_R$ is a completely $(\sigma,\delta)$-compatible module, then $M_R$ is a completely $(\sigma',\delta')$-compatible module.
\end{proposition}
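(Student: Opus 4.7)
The plan is to unpack the definition of complete $(\sigma',\delta')$-compatibility and reduce each of the two required conditions at an arbitrary quotient $(M/N)_R$ to the corresponding condition for $(\sigma,\delta)$ applied to a suitably chosen element, exploiting the fact that $\sigma$ is bijective so that $\sigma' = \sigma^{-1}$ and $\delta' = -\delta\sigma^{-1}$ are well defined on all of $R$.

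First, I would fix an arbitrary submodule $N_R$ of $M_R$. By hypothesis, $(M/N)_R$ is $(\sigma,\delta)$-compatible, and it suffices to verify that the same quotient is $\sigma'$-compatible and $\delta'$-compatible; running this argument uniformly in $N$ yields complete $(\sigma',\delta')$-compatibility. Throughout, I would denote elements of $M/N$ by $\overline{m}$ and use $r \in R$.

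For the $\sigma'$-compatibility of $(M/N)_R$, I would argue as follows. Given $r \in R$, set $s := \sigma^{-1}(r) \in R$, which exists and is unique because $\sigma$ is bijective. Then $\sigma(s) = r$. If $\overline{m}r = 0$, i.e.\ $\overline{m}\sigma(s)=0$, then $\sigma$-compatibility of $(M/N)_R$ forces $\overline{m}s = \overline{m}\sigma^{-1}(r) = \overline{m}\sigma'(r) = 0$. Conversely, if $\overline{m}\sigma'(r) = \overline{m}s = 0$, then $\sigma$-compatibility gives $\overline{m}\sigma(s)=\overline{m}r=0$. For the $\delta'$-compatibility, suppose $\overline{m}r = 0$; I would again set $s := \sigma^{-1}(r)$ so that $\overline{m}\sigma(s) = 0$. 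The $\sigma$-compatibility of $(M/N)_R$ yields $\overline{m}s = 0$, and then $\delta$-compatibility of the same quotient yields $\overline{m}\delta(s) = 0$, i.e.\ $\overline{m}\delta\sigma^{-1}(r) = 0$, hence $\overline{m}\delta'(r) = -\overline{m}\delta\sigma^{-1}(r) = 0$, as required.

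There is no real obstacle here: the argument is essentially formal, and the only place bijectivity of $\sigma$ is used is to guarantee that $\sigma^{-1}(r)$ makes sense for every $r \in R$, so that one may pass from a compatibility condition stated for $r$ to one stated for $\sigma(s)$. The one small point that deserves care is to do the verification at an arbitrary quotient $(M/N)_R$ rather than only at $M_R$ itself, since ``completely'' compatibility is the property that every quotient be compatible; but this is handled simply by observing that the $\sigma$- and $\delta$-compatibility of every quotient of $M_R$ (which is exactly the hypothesis) is precisely what the argument above invokes.
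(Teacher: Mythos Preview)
Your proof is correct and follows essentially the same route as the paper: both arguments fix an arbitrary submodule $N_R$, reduce $\sigma'$-compatibility of $(M/N)_R$ to $\sigma$-compatibility via the substitution $s=\sigma^{-1}(r)$, and then derive $\delta'$-compatibility by first passing from $mr\in N$ to $m\sigma^{-1}(r)\in N$ and then applying $\delta$-compatibility. Your write-up is in fact somewhat more explicit than the paper's (you track the sign in $\delta'=-\delta\sigma^{-1}$ and spell out both directions of the $\sigma'$-equivalence), but the underlying argument is identical.
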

\begin{proof}
Assume that $M_R$ is completely $(\sigma,\delta)$-compatible. If $N_R$ is a submodule of $M_R$ then $mr \in N_R$ if and only if $m\sigma(r)\in N_R$, for all $m\in M_R$ and $r\in R$. Since $\sigma$ is bijective we have $m\sigma^{-1}(r) \in N_R$ if and only if $mr\in N_R$, and so $(M/N)_R$ is a $\sigma'$-compatible proving that $M_R$ is completely $\sigma'$-compatible. On the other hand, if $M_R$ is completely $\sigma'$-compatible then $mr \in N_R$ which implies that $m\sigma^{-1}(r)\in N_R$. If $M_R$ is completely $\delta$-compatible and $m\sigma^{-1}(r)\in N_R$, then $m\delta\sigma^{-1}(r)$ and thus $M_R$ is completely $\delta'$-compatible. Therefore $M_R$ is completely $(\sigma',\delta')$-compatible.
\end{proof}

\subsection{Attached prime ideals}\label{Attached} 

In this section, we define an $A$-module structure to the inverse polynomial module $M[x^{-1}]_R$ and study the attached prime ideals of the module $M[x^{-1}]_A$. The action of $A$ over $M[x^{-1}]_R$ is given by
    \begin{align}
        mx^{-1}r &:= m\sigma'(r)x^{-1} + m\delta'(r)\ \text{for all}\ r \in R\ \text{and}\ m \in M_R,\ \text{and} \label{eqn:(3.1)}\\
         x^{-i}x^{j}&:=x^{-i+j}\ \text{if}\ j \le i\ \text{and}\ 0\ \text{otherwise} \label{eqn:(3.2)}.
    \end{align}

\begin{remark} By (\ref{eqn:(3.1)}) and (\ref{eqn:(3.2)}), if $\delta:= 0$ then $mx^{-i}rx^j:=m\sigma'^{i}(r)x^{-i+j}$ for all $r \in R$ and $i,j \in \mathbb{N}$ with $j \le i$, which coincides with $M[x^{-1}]_S$ \cite[p. 538]{Annin2011}. 
\end{remark}

If $N_R$ is a right module and $N[x^{-1}]_R$ (resp., $N[x^{-1}]_A$) is a right module, then the right annihilator is denoted by ${\rm ann}_R(N[x^{-1}])$ (resp., ${\rm ann}_A(N[x^{-1}])$). The following lemma characterizes the ideals generated by right prime ideals of $A$ that correspond to annihilators of quotient modules of $M[x^{-1}]_A$.

\begin{lemma}\label{miniTheorem2.1}
If $M_R$ is completely $(\sigma, \delta)$-compatible and $P$ is a right prime ideal of $R$ such that $P = {\rm ann}_R(M/N)$ for some submodule $N_R$ of $M_R$, then 
\[
PA = {\rm ann}_A(M[x^{-1}]/N[x^{-1}]).
\]
\end{lemma}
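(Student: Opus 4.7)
The plan is to prove both inclusions. For $PA \subseteq {\rm ann}_A(M[x^{-1}]/N[x^{-1}])$, I invoke Proposition \ref{completelycompatible} (applicable since $\sigma$ is an automorphism by Definition \ref{importantdefinition}) to upgrade the hypothesis to complete $(\sigma',\delta')$-compatibility of $M_R$. Iterating the $(\sigma',\delta')$-compatibility of $(M/N)_R$ in the spirit of Proposition \ref{Propertycompletely}, one obtains that $M \cdot w(p) \subseteq N$ for every $p \in P$ and every finite composition $w$ of the operators $\sigma',\delta'$. Because each $f_j^s$ is by construction a sum of such words, the formula (\ref{relacion2}) gives $m x^{-j} \cdot p = \sum_{s=0}^{j} m f_j^s(p)\, x^{-s} \in N[x^{-1}]$ for every $m \in M$ and every $j \ge 0$. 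Right multiplication by any power $x^i$ preserves $N[x^{-1}]$ in view of (\ref{eqn:(3.2)}), so by additivity one concludes $M[x^{-1}] \cdot (PA) \subseteq N[x^{-1}]$.

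For the reverse inclusion, let $f(x) = r_0 + r_1 x + \cdots + r_k x^k$ be an arbitrary element of ${\rm ann}_A(M[x^{-1}]/N[x^{-1}])$ and test it against the elements $m x^{-k}$ with $m \in M$. A direct expansion using (\ref{eqn:(3.1)}), (\ref{eqn:(3.2)}), and (\ref{relacion2}) yields
\[
m x^{-k} f(x) \;=\; \sum_{t=0}^{k} \Bigl( \sum_{i=0}^{k-t} m f_k^{i+t}(r_i) \Bigr) x^{-t},
\]
so each inner sum lies in $N$. Writing $t = k - j$ for $0 \le j \le k$, the diagonal term (at $i = j$) is $m f_k^k(r_j) = m \sigma'^{k}(r_j)$, whereas the off-diagonal terms (for $i < j$) involve $f_k^{i+k-j}$, which by definition is a sum of words in $\sigma',\delta'$ containing exactly $j-i \ge 1$ occurrences of $\delta'$.

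I then induct on $j$ from $0$ to $k$ to show that each $r_j$ lies in $P$. The base case $j=0$ is immediate: $m \sigma'^{k}(r_0) \in N$ for every $m$, and repeatedly applying complete $\sigma'$-compatibility peels off the $\sigma'$'s to yield $m r_0 \in N$, hence $r_0 \in P$. For the inductive step, assuming $r_0, \ldots, r_{j-1} \in P$, the iteration argument already used shows $m f_k^{i+k-j}(r_i) \in N$ for every $i<j$, because $r_i$ annihilates $M$ modulo $N$ and $f_k^{i+k-j}(r_i)$ is a sum of words in $\sigma',\delta'$ applied to $r_i$. The residual identity $m \sigma'^{k}(r_j) \in N$ then forces $r_j \in P$. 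The principal technical obstacle is the mixing of $\sigma'$ and $\delta'$ inside the operators $f_k^{i+t}$; the resolving observation is that complete $(\sigma',\delta')$-compatibility renders every such word transparent when testing membership in the annihilator, so the upper-triangular structure of the coefficient formula is enough to extract each $r_j$ in turn.
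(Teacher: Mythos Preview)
Your proof is correct. The inclusion $PA \subseteq {\rm ann}_A(M[x^{-1}]/N[x^{-1}])$ is handled exactly as in the paper, by invoking Propositions~\ref{Propertycompletely} and~\ref{completelycompatible} to push elements of $P$ through arbitrary words in $\sigma',\delta'$.

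For the reverse inclusion the two arguments diverge. The paper proceeds contrapositively in one line: if some coefficient $r_l \notin P$, choose $m \in M$ with $mr_l \notin N$ and assert that $mf(x) \notin N[x^{-1}]$. You instead work directly: test $f$ against $mx^{-k}$, expand via~(\ref{relacion2}) and~(\ref{eqn:(3.2)}) into the triangular system $\sum_{i=0}^{j} m f_k^{\,i+k-j}(r_i) \in N$, and use induction on $j$ together with complete $(\sigma',\delta')$-compatibility to extract $r_j \in P$ one coefficient at a time. Your route is more explicit and in fact more robust: under the action~(\ref{eqn:(3.2)}) the constant element $m = mx^0$ annihilates every $r_ix^i$ with $i\ge 1$, so the paper's test literally recovers only $mr_0$; detecting a bad coefficient $r_l$ with $l>0$ really requires a test element of the form $mx^{-l}$ (after first stripping off the lower coefficients already known to lie in $P$), which is precisely what your inductive scheme accomplishes.
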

\begin{proof}
    By Propositions \ref{Propertycompletely} and \ref{completelycompatible}, if $M_R$ is completely $(\sigma, \delta)$-compatible then $C_{mf} \subseteq N$ for every $m(x) \in M[x^{-1}]_A$, $f(x)\in PA$, and thus $m(x)f(x) \in N[x^{-1}]_A$ proving that $PA \subseteq {\rm ann}_A(M[x^{-1}]/N[x^{-1}])$. For the other inclusion, if $f(x) \notin PA$ then there exists a monomial $r_lx^l$ of $f(x)$ such that $r_l \notin P$ for some $0 \le l \le j$. So there exists $m \in M_R$ such that $mr_l \notin N_R$, which implies that $mf(x) \notin N[x^{-1}]_A$ and so $f(x) \notin {\rm ann}_A(M[x^{-1}]/N[x^{-1}])$ whence ${\rm ann}_A(M[x^{-1}]/N[x^{-1}]) \subseteq PA$.
\end{proof}

Theorem \ref{AnninTheorem2.1} shows that right ideals of $A$ generated by attached prime ideals of $M_R$ are attached prime ideals of $M[x^{-1}]_A$, and extends \cite[Theorem 2.1]{Annin2011}.

\begin{theorem}\label{AnninTheorem2.1}
If $M_R$ is completely $(\sigma, \delta)$-compatible then
\[
\displaystyle {\rm Att} (M[x^{-1}]_A) \supseteq \left \{ PA \ | \ P\in {\rm Att}(M_R) \right \}.    
\]
\end{theorem}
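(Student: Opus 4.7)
The plan is, given $P\in\mathrm{Att}(M_R)$ with $P=\mathrm{ann}_R(M/N)$ and $(M/N)_R$ coprime, to exhibit $Q:=(M[x^{-1}]/N[x^{-1}])_A$ as a coprime $A$-quotient of $M[x^{-1}]_A$ whose annihilator is $PA$; since annihilators of coprime modules are right prime ideals, this will witness $PA\in\mathrm{Att}(M[x^{-1}]_A)$. Lemma~\ref{miniTheorem2.1} already gives $\mathrm{ann}_A(Q)=PA$, so everything reduces to verifying that $\mathrm{ann}_A(M[x^{-1}]/L)=PA$ for every $A$-submodule $L$ with $N[x^{-1}]\subseteq L\subsetneq M[x^{-1}]$.

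The decisive auxiliary object is a \emph{depth} $k_0$ attached to $L$. For each $k\ge 0$ I would define $L_k\subseteq M$ as the set of $x^{-k}$-coefficients of elements of $L$ of degree $\ge -k$; each $L_k$ is $R$-stable and contains $N$. A short induction shows that $L_k=M$ for every $k$ would imply $Mx^{-k}\subseteq L$ for every $k$ and hence $L=M[x^{-1}]$; so a smallest $k_0$ with $L_{k_0}\subsetneq M$ exists, and $Mx^{-k}\subseteq L$ for every $k<k_0$. Coprimeness of $(M/N)_R$ then forces $\mathrm{ann}_R(M/L_{k_0})=P$. The main obstacle this invariant overcomes is the degenerate case $L\cap M=M$ (very much a possibility, since $M$ itself is an $A$-submodule of $M[x^{-1}]$): here $L_0$ is useless, but $L_{k_0}$ is the right target, because the inclusion $M+Mx^{-1}+\cdots+Mx^{-(k_0-1)}\subseteq L$ will absorb the poorly-controlled shallow coefficients appearing below.

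To prove $\mathrm{ann}_A(M[x^{-1}]/L)\subseteq PA$, take $f(x)=r_0+r_1x+\cdots+r_jx^j$ in it, suppose $f\notin PA$, and let $l$ be the \emph{smallest} index with $r_l\notin P$. Proposition~\ref{completelycompatible} shows $M_R$ is also completely $(\sigma',\delta')$-compatible, and the compatibility conditions on $(M/N)_R$ translate into $\sigma'(P)=P$ and $\delta'(P)\subseteq P$; consequently $f_{l+k_0}^{\,j'}(p)\in P$ for every $p\in P$ and every $j'$. For arbitrary $m\in M$, expanding $g(x):=mx^{-(l+k_0)}f(x)\in L$ via formulas~(\ref{relacion2}) and~(\ref{eqn:(3.2)}) gives the coefficient of $x^{-t}$ in $g(x)$ as
\[
g_t \;=\; \sum_{i=0}^{\min(j,\,l+k_0-t)} m\,f_{l+k_0}^{\,t+i}(r_i).
\]
For $t>k_0$ every admissible $i$ is $<l$, hence $g_t\in MP\subseteq N$; for $t=k_0$ the contributions with $i<l$ are again in $N$ while the $i=l$ contribution equals $m\sigma'^{\,l+k_0}(r_l)$; for $t<k_0$ one only needs the trivial fact $g_t\in M$.

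Splitting $g(x)=\sum_{t<k_0}g_tx^{-t}+g_{k_0}x^{-k_0}+\sum_{t>k_0}g_tx^{-t}$, the first sum lies in $L$ because each term $g_tx^{-t}\in Mx^{-t}\subseteq L$ for $t<k_0$, and the third sum lies in $N[x^{-1}]\subseteq L$. Since $g(x)\in L$, the middle term $g_{k_0}x^{-k_0}=(m\sigma'^{\,l+k_0}(r_l)+n)x^{-k_0}$ (with $n\in N$) is forced to lie in $L$; subtracting $nx^{-k_0}\in N[x^{-1}]$ yields $m\sigma'^{\,l+k_0}(r_l)x^{-k_0}\in L$, and reading off its $x^{-k_0}$-coefficient gives $m\sigma'^{\,l+k_0}(r_l)\in L_{k_0}$. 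Varying $m$ forces $\sigma'^{\,l+k_0}(r_l)\in\mathrm{ann}_R(M/L_{k_0})=P$, and the $\sigma$-stability of $P$ then gives $r_l\in P$, contradicting the choice of $l$. Thus $\mathrm{ann}_A(M[x^{-1}]/L)=PA$, $Q$ is coprime, and the theorem follows.
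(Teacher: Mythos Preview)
Your proof is correct and follows a genuinely different route from the paper's. The paper attaches to an $A$-submodule $Q$ with $N[x^{-1}]\subsetneq Q\subsetneq M[x^{-1}]$ the $R$-submodule $Q'\subseteq M$ generated by \emph{all} coefficients of elements of $Q$, asserts that $Q'\subsetneq M$, invokes coprimeness of $(M/N)_R$ to identify ${\rm ann}_R(M/Q')=P$, and then extracts $m_ir_j\in Q'$ (hence $r_j\in P$, hence $m_ir_j\in N$) via compatibility. You instead build the filtration $(L_k)_{k\ge 0}$ of depth-$k$ leading-coefficient submodules, isolate the minimal depth $k_0$ with $L_{k_0}\subsetneq M$, and run a targeted coefficient computation on $mx^{-(l+k_0)}f(x)$, using the inclusion $Mx^{-t}\subseteq L$ for $t<k_0$ to discard the shallow terms. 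The payoff of your approach is robustness: for $L=N[x^{-1}]+M$ one has $N[x^{-1}]\subsetneq L\subsetneq M[x^{-1}]$ (take $m\in M\setminus N$; then $mx^{-1}\notin L$), yet every element of $M$ occurs as a coefficient of some element of $L$, so the paper's implication ``$Q\neq M[x^{-1}]\Rightarrow Q'\neq M$'' does not hold as stated, and your depth mechanism is precisely what handles this degenerate case. The cost is a somewhat longer explicit coefficient analysis; the paper's argument is more concise whenever $Q'$ really is a proper submodule.
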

\begin{proof}
If $P$ is an attached prime ideal of $M_R$ and $(M/N)_R$ is the quotient coprime of $M_R$ such that $P = {\rm ann}_R(M/N)$ for some submodule $N_R$ of $M_R$, it follows that $PA={\rm ann}_A(M[x^{-1}]/N[x^{-1}])$ by Lemma \ref{miniTheorem2.1}. Let us prove that $(M[x^{-1}]/N[x^{-1}])_A$ is a quotient coprime. If $M[x^{-1}]/N[x^{-1}]\neq 0$ then there exists a submodule $Q_A$ of $M[x^{-1}]_A$ such that $Q \supsetneq N[x^{-1}]$. Let $C_Q$ be the subset of $M$ that consists of all coefficients of the elements of $Q$ and consider $Q_R'$ the submodule of $M_R$ generated by $C_Q$. Notice that if $Q \neq M[x^{-1}]$ then $Q' \neq M$, and if $(M/N)_R$ is a coprime module then $P={\rm ann}_R(M/N)={\rm ann}_R(M/Q')$. 

If $g(x) = r_0 + \cdots + r_jx^j \in {\rm ann}_A(M[x^{-1}]/Q)$ then $f(x)g(x) \in Q_A$ and hence $C_{fg} \subseteq Q_R'$ for all $f(x) =  m_0 + \cdots + m_kx^{-k} \in M[x^{-1}]_A$. By Propositions \ref{Propertycompletely} and \ref{completelycompatible}, if $M_R$ is completely $(\sigma, \delta)$-compatible then $m_ir_j \in Q_R'$, whence $m_ir_j \in N_R$ for all $i,j$. Since $m_ir_j \in N_R$, we have that $m(x)f(x) \in N[x^{-1}]_A$ by Propositions \ref{Propertycompletely} and \ref{completelycompatible}, and thus $M[x^{-1}]f(x) \subseteq N[x^{-1}]$. Therefore $f(x)\in {\rm ann}_A(M[x^{-1}]/N[x^{-1}])$ proving that ${\rm ann}_A(M[x^{-1}]/Q) \subseteq {\rm ann}_A(M[x^{-1}]/N[x^{-1}])$. 

If $f \in {\rm ann}_A(M[x^{-1}] /N[x^{-1}])$ then $M[x^{-1}] f \subseteq N[x^{-1}]$, and since $N[x^{-1}] \subsetneq Q$ we have that $M[x^{-1}]f \subsetneq Q$ showing that ${\rm ann}_A(M[x^{-1}] /N[x^{-1}])\subseteq{\rm ann}_A(M[x^{-1}]/Q)$, and thus $(M[x^{-1}]/N[x^{-1}])_A$ is a coprime module.
\end{proof}

\begin{corollary}[{\cite[Theorem 2.1]{Annin2011}}] If $M_R$ is completely $\sigma$-compatible then
\[
\displaystyle {\rm Att}(M[x^{-1}]_S) \supseteq \left \{ P[x] \ | \ P\in {\rm Att}(M_R) \right \}.    
\]
\end{corollary}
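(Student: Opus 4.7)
The plan is to derive the corollary as an immediate specialization of Theorem \ref{AnninTheorem2.1} to the case $\delta = 0$. First, I would invoke Example \ref{ExampleskewOre}(1), which records that when $\delta$ vanishes the skew Ore polynomial ring $A = R(x;\sigma,\delta)$ coincides with the skew polynomial ring $S = R[x;\sigma]$. Correspondingly, the induced maps on the inverse side become $\sigma' = \sigma^{-1}$ and $\delta' = -\delta\sigma^{-1} = 0$, so the $A$-action on $M[x^{-1}]$ prescribed by (\ref{eqn:(3.1)}) and (\ref{eqn:(3.2)}) recovers the $S$-action on $M[x^{-1}]_S$ considered by Annin. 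This identification is the content of the Remark immediately following (\ref{eqn:(3.2)}), so nothing new needs to be verified here.

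Next, I would check that the compatibility hypotheses match. When $\delta = 0$, the condition ``$mr = 0$ implies $m\delta(r) = 0$'' holds vacuously on every module and every quotient, so ``completely $(\sigma,\delta)$-compatible'' reduces to ``completely $\sigma$-compatible.'' Thus the assumption of the corollary is precisely the $\delta = 0$ case of the hypothesis of Theorem \ref{AnninTheorem2.1}.

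Finally, I would identify the right ideal $PA$ of $A$ with the right ideal $P[x]$ of $S$ in the $\delta = 0$ situation. Because the commutation rule of $S$ reads $xr = \sigma(r)x$ and $P$ is a right ideal of $R$ stable under $\sigma$ in the relevant sense, any element of $PA$ has the form $\sum_i p_i x^i$ with $p_i \in P$, and every such polynomial belongs to $PA$. This matches exactly Annin's notation $P[x]$.

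Once these three bookkeeping reductions are in place, Theorem \ref{AnninTheorem2.1} applied with $\delta = 0$ produces the inclusion ${\rm Att}(M[x^{-1}]_S) \supseteq \{P[x] \mid P \in {\rm Att}(M_R)\}$, which is the statement of the corollary. I do not expect any genuine obstacle: the only care required is in matching the action formulas and the definition of $PA$ when $\delta$ is trivial, and both comparisons are immediate from the definitions already recorded in the paper.
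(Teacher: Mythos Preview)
Your proposal is correct and matches the paper's approach: the paper states this corollary immediately after Theorem \ref{AnninTheorem2.1} with no separate proof, so the intended argument is exactly the specialization $\delta = 0$ that you outline, using Example \ref{ExampleskewOre}(1) and the Remark following (\ref{eqn:(3.2)}) to identify $A$ with $S$, the action on $M[x^{-1}]$, and $PA$ with $P[x]$.
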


Lemma \ref{AnninLemma2.4} shows when a coprime module over the ring of skew Ore polynomials $A$ is a coprime module over $R$ and generalizes \cite[Lemma 2.4]{Annin2011}.

\begin{lemma}\label{AnninLemma2.4}
If $P_A$ is a coprime module and $P_R$ is a completely $(\sigma, \delta)$-compatible module then $P_R$ is coprime.
\end{lemma}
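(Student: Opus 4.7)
The plan is to verify the definition of coprimeness for $P_R$ directly. First, $P_R \neq 0$ since $P_A \neq 0$, and for any proper $R$-submodule $N$ the inclusion ${\rm ann}_R(P) \subseteq {\rm ann}_R(P/N)$ is automatic. So the substantive task is: given $r \in R$ with $Pr \subseteq N$, show that $Pr = 0$.

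My first move is to pass from the $R$-submodule $N$ to the $A$-submodule it generates. Using the commutation rule (\ref{eq:our}) together with the local nilpotence of $\delta$ and the fact that $N$ is $R$-stable, one checks that $NA = N + Nx + Nx^2 + \cdots$ is closed under both the $R$-action and the $x$-action, and is therefore an $A$-submodule of $P$. Since $Pr \subseteq N \subseteq NA$, the element $r \in A$ lies in ${\rm ann}_A(P/NA)$. In the case when $NA$ is a proper $A$-submodule of $P$, the hypothesis that $P_A$ is coprime forces ${\rm ann}_A(P/NA) = {\rm ann}_A(P)$; then $r \in {\rm ann}_A(P) \cap R = {\rm ann}_R(P)$, which is precisely $Pr = 0$.

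The step I expect to be the main obstacle is excluding the degenerate possibility that $NA = P$ while $N$ is still a proper $R$-submodule. To handle it I plan to exploit the completely $(\sigma,\delta)$-compatible hypothesis via Proposition \ref{Propertycompletely}(1): the two-sided ideal $T := \{s \in R : Ps \subseteq N\}$ of $R$ is then closed under $\sigma$ and $\delta$, so the associated two-sided ideal $TA = AT$ of $A$ admits the clean description $\bigoplus_{k \ge 0} T x^k$ and satisfies $P(TA) \subseteq NA$. I would then replace $N$ by the smaller $R$-submodule $PT \subseteq N$ (note $Pr \subseteq PT$ because $r \in T$) and repeat the argument, producing a descending chain of two-sided ideals $T \supseteq T_1 \supseteq T_2 \supseteq \cdots$ of $R$; either some step of the iteration yields a proper $A$-submodule $PT_iA$ of $P$ and we finish as above by applying $P_A$ coprime to that submodule, or the chain collapses in a way that, via the $\sigma$- and $\delta$-closure of the $T_i$, forces $r \in {\rm ann}_R(P)$ directly. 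Making this descent rigorous — in particular controlling its stabilization — is where the combinatorial content of Proposition \ref{Propertycompletely} is used most heavily, and is the technical heart of the argument.
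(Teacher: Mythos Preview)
Your opening reduction is fine, and the case $NA \subsetneq P$ is handled correctly. The problem is the ``degenerate'' case: it is \emph{not} degenerate, it is the generic situation, and your descent does not close the gap.

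Concretely, take $\sigma = {\rm id}$, $\delta = 0$, $M_R$ any nonzero module, and $P = M[x^{-1}]$ with the inverse-polynomial $A$-action. Let $N = \{\sum_{i \ge 1} m_i x^{-i}\}$ be the set of elements with vanishing constant term; this is a proper $R$-submodule, but $N \cdot x$ already contains every $m \in M$ in degree~$0$, so $NA = P$. Your chain $T \supseteq T_1 \supseteq \cdots$ need not stabilise at anything useful: at each stage you only know $r \in T_i$ and $PT_i \subseteq PT_{i-1}$, and without a finiteness hypothesis on $R$ or $P$ there is no mechanism forcing $\bigcap_i T_i \subseteq {\rm ann}_R(P)$. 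The phrase ``either some step yields a proper $A$-submodule \ldots\ or the chain collapses in a way that forces $r \in {\rm ann}_R(P)$'' is precisely the missing argument, not a description of one.

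The paper avoids this entirely by going the other direction: instead of enlarging $N$ to the $A$-submodule $NA$ it generates, take $N' := \sum\{Q' : Q'_A \subseteq P_A,\ Q' \subseteq N\}$, the largest $A$-submodule \emph{contained in} $N$. Then $N' \subseteq N \subsetneq P$ is automatically proper, so coprimeness of $P_A$ gives ${\rm ann}_A(P/N') = {\rm ann}_A(P)$ for free. The compatibility hypothesis is then used for a different purpose: one shows, via the relation $rx = x\sigma'(r) + x\delta'(r)x$ and Propositions~\ref{Propertycompletely} and~\ref{completelycompatible}, that $prA \subseteq N$ for every $p \in P$. Since $prA$ is an $A$-submodule sitting inside $N$, it lies in $N'$ by maximality; hence $Pr \subseteq N'$ and $r \in {\rm ann}_A(P/N') = {\rm ann}_A(P)$. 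This inward move is the idea your argument is missing.
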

\begin{proof}
Consider a submodule $Q_R$ of $P_R$ and let us show that ${\rm ann}_R(P)={\rm ann}_R(P/Q)$. If $r \in {\rm ann}_R(P)$ then $Pr=0 \in Q$, and so $r \in {\rm ann}_R(P/Q)$. For the other inclusion, assume that $r \in {\rm ann}_R(P/Q)$ and $N_A:= \sum Q_A'$ where $Q_A'$ is any submodule over $A$ with $Q' \subseteq Q$. So $N_A \subseteq Q_R \subsetneq P_A$, and since $P_A$ is a coprime module then ${\rm ann}_A(P) = {\rm ann}_A(P/N)$. Let $p \in P_A$ and denotes by $prA_A$ the module generated by the element $pr$. Let us prove that $prA \subseteq N$. Since $x^{-1}r:= \sigma'(r)x^{-1} + \delta'(r)$, we have $rx = x\sigma'(r) + x \delta'(r)x$ for every $r \in R$, and thus if $f(x)=r_0 + \cdots + r_lx^l \in A$ and $Prr_j \subseteq Q$ for every $0 \le j \le l$, then $prf(x) \in Q_A$ by Propositions \ref{Propertycompletely} and \ref{completelycompatible}. Hence $prA \subseteq Q$ and so $prA \subseteq N$ by definition of $N_A$. In this way, $Pr \subseteq N$ which implies that $r \in {\rm ann}_A(P/N)={\rm ann}_A(P)$ proving that ${\rm ann}_R(P)={\rm ann}_R(P/Q)$ for every proper submodule $Q_R$ of $P_R$, that is, $P_R$ is a coprime module.
\end{proof}

For any submodule $P_R$ of $M[x^{-1}]_R$, we set $P_{k} := \{m \in M\ |\  mx^{-k} \in P \}$ for each $k \in \mathbb{N}$ and denote by $\langle P_k \rangle$ the submodule of $M_R$ generated by $P_k$. Lemma \ref{AnninLemma2.5} guarantees the existence of certain maximal submodules of $M_R$ and generalizes \cite[Lemma 2.5]{Annin2011}. 

\begin{lemma}\label{AnninLemma2.5}
If $P_R$ is a maximal submodule of $M[x^{-1}]_R$, we either have $\langle P_k \rangle = M$ or else $\langle P_k \rangle$ is a maximal submodule of $M_{R}$ for each $k \in \mathbb{N}$. Additionally, there exists $k \in \mathbb{N}$ for which the latter holds.
\end{lemma}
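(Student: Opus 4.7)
My plan is to exploit the right $R$-submodule filtration $F_k := M \oplus Mx^{-1} \oplus \cdots \oplus Mx^{-k}$ of $M[x^{-1}]_R$, whose quotients $F_k/F_{k-1}$ are isomorphic to $M$ as right $R$-modules (twisted by the automorphism $\sigma^{-k}$, which preserves the submodule lattice). The crucial auxiliary object is the right $R$-submodule of $M$
\[
\Pi_k := \{m \in M : mx^{-k} + q \in P \text{ for some } q \in F_{k-1}\},
\]
i.e., the image of $P\cap F_k$ in $F_k/F_{k-1}\cong M$; it visibly contains $P_k$, hence $\langle P_k\rangle\subseteq\Pi_k$.

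First I will show that $\Pi_k$ is either $M$ or a maximal submodule of $M_R$. Given a hypothetical chain $\Pi_k\subsetneq N\subsetneq M$, I form $\hat N := \{y \in F_k : c_k(y)\in N\}$, where $c_k$ picks the coefficient of $x^{-k}$; the identity $c_k(yr) = c_k(y)\sigma^{-k}(r)$ (since only the $x^{-k}$ term of $y$ contributes to $x^{-k}$ in $yr$) shows that $\hat N$ is an $R$-submodule of $M[x^{-1}]_R$. Any $m_0\in N\setminus\Pi_k$ produces $m_0x^{-k}\in\hat N\setminus P$ (otherwise $m_0 \in P_k\subseteq\Pi_k$), so $\hat N\not\subseteq P$ and maximality of $P$ forces $P+\hat N = M[x^{-1}]$. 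Writing $mx^{-k} = p+y$ with $p\in P$, $y\in\hat N$ and extracting the $x^{-k}$-coefficient then yields $m\in\Pi_k+N = N$, hence $M = N$, a contradiction.

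Next I set $k_0 := \min\{k : F_k\not\subseteq P\}$, which exists because $\bigcup_k F_k = M[x^{-1}]\supsetneq P$. By minimality $F_{k_0-1}\subseteq P$, and by maximality of $P$ one has $P+F_{k_0} = M[x^{-1}]$. For $k<k_0$, $F_k\subseteq P$ gives $P_k = M = \langle P_k\rangle$ at once. For $k=k_0$, every $q\in F_{k_0-1}$ lies in $P$, so any $m\in\Pi_{k_0}$ yields $mx^{-k_0} = (mx^{-k_0}+q)-q \in P$ and $m\in P_{k_0}$; hence $\Pi_{k_0} = P_{k_0} = \langle P_{k_0}\rangle$, which by the first step is $M$ or maximal, with the $M$-case excluded by $F_{k_0}\not\subseteq P$. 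This establishes the ``additionally'' clause at $k=k_0$.

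The main obstacle is the range $k>k_0$, where $F_{k-1}\not\subseteq P$ and the reduction $\Pi_k = P_k$ that worked at $k = k_0$ fails: the $R$-action $mx^{-k}r = \sum_i mf_k^i(r)x^{-i}$ produces intermediate tails at $x^{-i}$ for $k_0\le i<k$ that need not lie in $P$, so $P_k$ is not a priori an $R$-submodule and $\langle P_k\rangle$ can be strictly smaller than $\Pi_k$. My plan here is, assuming $\langle P_k\rangle\subsetneq N\subsetneq M$, to invoke maximality of $P$ applied to $P + (N[x^{-1}]\cap F_k)$ to decompose $mx^{-k} = p+f$ with $p\in P$ and $f\in N[x^{-1}]\cap F_k$, and then to cancel the intermediate coefficients of $p$ at $x^{-i}$ for $k_0<i<k$ by subtracting elements of $P$ (which exist because $\Pi_i = M$ for $i>k_0$, from the first step); the accumulated correction at $x^{-k_0}$ must then be shown to lie in $\Pi_{k_0}=P_{k_0}$ so that a final absorption into $F_{k_0-1}\subseteq P$ yields $(m-f_k)x^{-k}\in P$, giving $m\in P_k+N\subseteq\langle P_k\rangle+N = N$ and the desired contradiction. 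Tracking the accumulated correction at $x^{-k_0}$ through the successive cancellations is the main technical difficulty.
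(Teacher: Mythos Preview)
Your Steps 1 and 2 are correct and cleanly establish the ``additionally'' clause at $k=k_0$; this part is actually more transparent than the paper's argument. However, your Step 3 (the case $k>k_0$) is not a proof but only a plan, and the plan does not close. After your successive cancellations you obtain an element of $P$ of the shape $p_kx^{-k}+c'_{k_0}x^{-k_0}+(\text{terms in }F_{k_0-1})$; since $F_{k_0-1}\subseteq P$ this yields $p_kx^{-k}+c'_{k_0}x^{-k_0}\in P$, but to conclude $p_k\in P_k$ you would need $c'_{k_0}x^{-k_0}\in P$, i.e.\ $c'_{k_0}\in P_{k_0}$. Nothing in your construction forces this: each cancellation at level $i$ uses an \emph{arbitrary} witness $q_i\in F_{i-1}$ for $\Pi_i=M$, and the accumulated $x^{-k_0}$--coefficient $c'_{k_0}$ has no reason to land in the (generally proper) submodule $P_{k_0}$. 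You flag this yourself as ``the main technical difficulty,'' and indeed it is a genuine gap, not just bookkeeping.

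The paper avoids your case split altogether. Rather than working with the large submodule $\hat N$ or $N[x^{-1}]\cap F_k$, it uses the \emph{cyclic} module $m'x^{-k}R$: pick $m'\in M'\setminus\langle P_k\rangle$, so $m'x^{-k}\notin P$ and maximality gives $M[x^{-1}]=P+m'x^{-k}R$. For arbitrary $m\in M$ the paper then considers an element of the form $mx^{-k}+m'p_{k,r}$ (where $p_{k,r}=\sum_{i<k}f_k^i(r)x^{-i}$ is the tail of $x^{-k}r$), chosen so that subtracting $m'x^{-k}r$ leaves the pure monomial $(m-m'\sigma'^{k}(r))x^{-k}$. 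Once this monomial is shown to lie in $P$, one reads off $m-m'\sigma'^{k}(r)\in P_k\subseteq\langle P_k\rangle\subseteq M'$, and since $m'\sigma'^{k}(r)\in M'$ this gives $m\in M'$, whence $M'=M$. The point is that by using $m'x^{-k}R$---whose elements have all coefficients in $m'R\subseteq M'$ and a very specific tail structure---the paper can cancel the lower-order terms \emph{exactly} rather than inductively, which is precisely what your $\hat N$/$N[x^{-1}]$ approach cannot do. If you want to repair your argument for $k>k_0$, you should switch to this cyclic-module decomposition rather than try to control the accumulated correction $c'_{k_0}$.
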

\begin{proof}
Assume that there exists a submodule $M_R'$ such that $\langle P_k \rangle \subsetneq M' \subseteq M$ and let us see that $M'=M$. If $m' \in M_R'$ and $m \notin \langle P_k\rangle$ then $m'x^{-k} \notin P_R$, and since $P_R$ is a maximal submodule of $M[x^{-1}]_R$ we obtain $M[x^{-1}]_R = P_R + m'x^{-k}R_R$, that is, for every $f(x) \in M[x^{-1}]_R$ there exist $p \in P_R$ and $r \in R$ such that $f(x) = p + m'x^{-k}r$. If $m'x^{-k}r= m\sigma'^{k}(r)x^{-k} + mp_{k,r}$ where $p_{k,r}:=\sum_{i=0}^{k-1}f_{k}^{i}(r)x^{-i}$, consider the element $f'(x)=mx^{-k} + mp_{k,r}$. So $f'(x)=p + m'x^{-k}r$ for some $p \in P_R$ which implies that $p= (m - m'\sigma'^{k}(r))x^{-k} \in P_R$, whence $m - m'\sigma'^{k}(r) \in \langle P_{k}\rangle \subseteq M'$ and thus $m \in M_R'$. Therefore $M\subseteq M'$ and so $M'=M$ proving that $\langle P_k \rangle$ is a maximal submodule of $M_R$. If $\langle P_{k}\rangle = M$ for all $k \in \mathbb{N}$, then $P = M[x^{-1}]$ which is a contradiction. In this way, there is $k \in \mathbb{N}$ such that $\langle P_{k}\rangle$ is a maximal submodule of $M_R$. 
\end{proof}

Annin \cite{Annin2011} considered the {\em Bass modules} in his study of the attached prime ideals of $M[x^{-1}]_S$ \cite[p. 544]{Annin2011}. We recall that $M_R$ is called a {\em Bass module} if every proper submodule $N_R$ is contained in a maximal submodule of $M_R$ \cite[p. 205]{Faith1995}. Theorem \ref{AnninTheorem3.2} characterizes the attached primes of $M[x^{-1}]_A$ and extends \cite[Theorem 3.2]{Annin2011}.

\begin{theorem}\label{AnninTheorem3.2}
If $M[x^{-1}]_R$ is a completely $(\sigma, \delta)$-compatible Bass module then
\[
\displaystyle {\rm Att} (M[x^{-1}]_A) = \left \{ PA \ | \ P\in {\rm Att}(M_R) \right \}.    
\]
\end{theorem}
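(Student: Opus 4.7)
The inclusion $\supseteq$ is exactly Theorem \ref{AnninTheorem2.1}. For the reverse inclusion, I would take $Q \in {\rm Att}(M[x^{-1}]_A)$ and fix a coprime quotient $(M[x^{-1}]/L)_A$ with $Q = {\rm ann}_A(M[x^{-1}]/L)$. Since $L$ is a proper $R$-submodule of the Bass module $M[x^{-1}]_R$, it sits inside a maximal $R$-submodule $P_R$ of $M[x^{-1}]_R$. Lemma \ref{AnninLemma2.5} then produces $k \in \mathbb{N}$ for which $\langle P_k\rangle$ is a maximal $R$-submodule of $M_R$; set $\mathfrak{p}:={\rm ann}_R(M/\langle P_k\rangle)$. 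Since $(M/\langle P_k\rangle)_R$ is simple (hence coprime) and $\mathfrak{p}$ is consequently a prime right ideal, $\mathfrak{p}\in{\rm Att}(M_R)$.

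The task reduces to verifying $Q=\mathfrak{p}A$. By Lemma \ref{miniTheorem2.1}, $\mathfrak{p}A = {\rm ann}_A(M[x^{-1}]/\langle P_k\rangle[x^{-1}])$, and the proof of Theorem \ref{AnninTheorem2.1} shows that the quotient $(M[x^{-1}]/\langle P_k\rangle[x^{-1}])_A$ is coprime. My strategy is to form the $A$-submodule $L^{*}:=L + \langle P_k\rangle[x^{-1}]$ of $M[x^{-1}]$: if $L^{*}$ is proper, then $(M[x^{-1}]/L^{*})_A$ is a common non-zero $A$-quotient of the two coprime modules $(M[x^{-1}]/L)_A$ and $(M[x^{-1}]/\langle P_k\rangle[x^{-1}])_A$, so coprimeness collapses both annihilators onto ${\rm ann}_A(M[x^{-1}]/L^{*})$, yielding $Q=\mathfrak{p}A$.

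The main obstacle is therefore the properness of $L^{*}$. I would argue by contradiction: assuming $L^{*}=M[x^{-1}]$, the induced map $L \to M[x^{-1}]/\langle P_k\rangle[x^{-1}] \cong (M/\langle P_k\rangle)[x^{-1}]$ becomes surjective, so every coset of $\langle P_k\rangle[x^{-1}]$ admits a representative in $L$. Combined with Lemma \ref{AnninLemma2.4} (which delivers coprimeness of $(M[x^{-1}]/L)_R$) and the complete $(\sigma,\delta)$-compatibility encoded in Propositions \ref{Propertycompletely} and \ref{completelycompatible}, one should then be able to extract from such a representative a coefficient that violates the maximality of $\langle P_k\rangle$ in $M_R$. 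Unlike the skew polynomial case $\delta=0$ of \cite[Theorem 3.2]{Annin2011}, where the identity $x^{-k}r=\sigma'^k(r)x^{-k}$ decouples coefficients and the analogous verification is immediate, the presence of a non-zero derivation makes the formula (\ref{relacion2}) entangle every intermediate coefficient; the resulting coefficient-by-coefficient bookkeeping requires a descending induction on the negative degree that invokes Proposition \ref{Propertycompletely}(1)-(3) at each step to keep intermediate terms trapped inside $\langle P_k\rangle$. I expect this degree-descent step to constitute the technical heart of the proof.
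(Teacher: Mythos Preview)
Your overall architecture is genuinely different from the paper's. The paper does not pass through a common $A$-quotient $L^{*}=L+\langle P_k\rangle[x^{-1}]$. Instead it (i) applies Lemma~\ref{AnninLemma2.4} to make $(M[x^{-1}]/L)_R$ coprime, so that $Q\cap R={\rm ann}_R(M[x^{-1}]/P)$; (ii) chooses $k$ \emph{minimal} with $\langle P_k\rangle$ maximal, fixes $m_k\notin\langle P_k\rangle$, and builds an explicit $R$-surjection $M_R\to (M[x^{-1}]/P)_R$, $m\mapsto mx^{-k}+P$, whose well-definedness uses minimality of $k$ together with Propositions~\ref{Propertycompletely} and~\ref{completelycompatible}; this identifies $Q\cap R$ as an attached prime of $M_R$; and (iii) proves $Q=(Q\cap R)A$ by a coefficient-by-coefficient induction, multiplying the cyclic generator $m_kx^{-k-i-1}+P$ against tails of $f(x)\in Q$ and again using minimality of $k$ to absorb all ``lower terms'' into $P$.

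The real gap in your route is the properness of $L^{*}$, and the sketch you give does not close it. You promise to ``extract a coefficient that violates the maximality of $\langle P_k\rangle$'', but maximality of $\langle P_k\rangle$ is not threatened by merely producing elements of $L$ with a coefficient outside $\langle P_k\rangle$. Concretely, if $L^{*}=M[x^{-1}]$ then some $\ell\in L$ represents $m_kx^{-k}+\langle P_k\rangle[x^{-1}]$; comparing coefficients gives $\ell_k\notin\langle P_k\rangle$ while $\ell_j\in\langle P_k\rangle$ for $j\ne k$. But $L\subseteq P$, and $P$ is only an $R$-submodule of $M[x^{-1}]_R$ (neither graded nor an $A$-submodule); moreover for $\delta\ne 0$ the sets $P_j=\{m:mx^{-j}\in P\}$ are in general not $R$-submodules, so there is no evident mechanism forcing $\ell_kx^{-k}\in P$ or $\ell_k\in\langle P_k\rangle$, and no contradiction emerges. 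Your remark that in the $\delta=0$ case the ``analogous verification is immediate'' is also misleading: Annin's proof in \cite{Annin2011} follows the same surjection-plus-coefficient-induction scheme as the present paper, not a common-quotient argument, so properness of $L^{*}$ was never checked there either. Finally, you never invoke the minimality of $k$, which in the paper's argument is exactly the device that traps the ``lower terms'' inside $P$ and makes both the well-definedness of the surjection and the induction in step~(iii) go through; without minimality (or a replacement), the relationship between $\langle P_k\rangle$ and $P$ is too loose to finish.
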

\begin{proof}
In view of Theorem \ref{AnninTheorem2.1}, we only need to prove
\[
\displaystyle {\rm Att}(M[x^{-1}]_A) \subseteq \left \{ PA \ | \ P\in {\rm Att}(M_R) \right \}.    
\]
Let $I \in {\rm Att}(M[x^{-1}]_A)$ and $Q_A$ be a submodule of $M[x^{-1}]_A$ such that $(M[x^{-1}]/Q)_A$ is a coprime module with $I = {\rm ann}_A(M[x^{-1}]/Q)$. It is clear that $I \cap R$ is equal to ${\rm ann}_{R}(M[x^{-1}]/Q)$. By Lemma \ref{AnninLemma2.4}, $(M[x^{-1}]/Q)_R$ is a coprime module and since $M[x^{-1}]_R$ is a Bass module, $(M[x^{-1}]/Q)_R$ contains a maximal submodule such that $P/Q \subseteq M[x^{-1}]/Q$. By the coprimality of $(M[x^{-1}]/Q)_R$, $(M[x^{-1}]/P)_R$ is coprime and $I\cap R = {\rm ann}_{R}(M[x^{-1}]/P)$. Let us prove that $I \cap R \in {\rm Att}(M_R)$ and $I=(I\cap R)A$.

If $P_R$ is a maximal submodule of $M[x^{-1}]_R$, then there exists $k \in \mathbb{N}$ such that $\langle P_k\rangle$ is a maximal submodule of $M_R$ by Lemma \ref{AnninLemma2.5}, and we can set the smallest $k$ that satisfies this hypothesis. If $\langle P_k\rangle$ is maximal then there exists $m_k \in M$ such that $m_k \notin \langle P_k \rangle$, and so $mx^{-k} \notin P_R$ whence $mx^{-k} + P$ is a cyclic generator of the simple module $(M[x^{-1}]/P)_R$. Let $\varphi$ be the map of $(M/\langle P_k\rangle)_R$ over $(M[x^{-1}]/P)_R$ given by $\varphi(m_k + \langle P_k\rangle):= m_kx^{-k} + P$. By the complete $(\sigma, \delta)$-compatibility of $M_R$ and Propositions \ref{Propertycompletely} and \ref{completelycompatible}, if $m_kr \in \langle P_k\rangle$ for some $r\in R$ then $m_k\sigma'^{k}(r) \in \langle P_k\rangle $ whence $m_k\sigma'^{k}(r)x^{-k}\in P_R$. By minimality of $k$, $m_kf_{k}^i(r)x^{-i} \in P_R$ for all $0 \le i \le k-1$, which implies that $m_kx^{-k}r \in P_R$ and hence $\varphi$ is well defined. 

Let us see that $\varphi$ is surjective. If $m_kx^{-k} + P$ generates the module $(M[x^{-1}]/P)_R$ and $\varphi(m_k + \langle P_k\rangle) := m_kx^{-k} + P$ then $\varphi$ is surjective. Let $\psi$ be the homomorphism of $M_R$ over $(M/\langle P_k\rangle)_R$ defined by $\psi(m):= m + \langle P_k\rangle$ for every $m \in M_R$. If $\varphi\circ \psi$ is a surjective homomorphism of $M_R$ over $(M[x^{-1}]/P)_R$ and
 $I \cap {R} \in {\rm Att}((M[x^{-1}]/P)_R)$, then $I \cap R \in {\rm Att}(M_R)$.

We need to prove that $I=(I\cap R)A$. Since $I$ is an ideal of $A$ we have $(I \cap R)A\subseteq I$. For the other inclusion, take an element $f(x)= r_0 + \cdots + r_jx^j \in I$ and let us see by induction that $r_i \in I \cap R$ for all $1 \le i \le j$. Notice that $(m_kx^{-k}+P)f(x)=m_kx^{-k}r_0 +  \text{lower terms} \in Q \subseteq P$, and since every monomial of the ``lower terms'' belongs to the submodule $P_R$ by minimality of $k$, we have $m_kx^{-k}r_0 \in P_R$ which implies that $r_0 \in I \cap R$. Assume that $r_0, \ldots, r_i \in I \cap R$ for some $ i \le j$, and let us prove that $r_{i+1}\in I \cap R$. If $r_0, \ldots, r_i \in I \cap R$ then $r_0 + \cdots + r_ix^i \in I$, whence $r_{i+1}x^{i+1} + \cdots + r_jx^j \in I$.
\begin{center}
    $(m_kx^{-k-i-1})(r_{i+1}x^{i+1} + \cdots + r_jx^j)= m_k\sigma'^{k+i+1}(r_{i+1})x^{-k} +  \text{lower terms}  \in Q \subseteq P$.
\end{center}
By minimality of $k$, every monomial  of the ``lower terms'' belongs to the submodule $P_R$ and thus $m_k\sigma'^{k+i+1}(r_{i+1})x^{-k} \in P_R$, and by relation $x^{-1}r:= \sigma'(r)x^{-1} + \delta'(r)$ it follows that 
\begin{center}
    $m_k\sigma'^{k+i+1}(r_{i+1})x^{-k}=m_kx^{-k}\sigma'^{i+1}(r_{i+1}) +  \text{lower terms} \in Q \subseteq P$,
\end{center}
where every monomial  of the ``lower terms'' belongs to $P_R$ by minimality of $k$. So, if $M[x^{-1}]_R$ is a completely $(\sigma, \delta)$-compatible module and $m_kx^{-k}\sigma'^{i+1}(r_{i+1}) \in P_R$, then $m_kx^{-k}r_{i+1} \in P_R$ and thus $r_{i+1} \in I \cap R$ whence $f(x)= r_0 + \cdots + r_jx^j$ belongs to $(I \cap R)A$. Therefore $I=(I \cap R)A$.

\end{proof}

\begin{corollary}[{\cite[Theorem 3.2]{Annin2011}}] \label{CorollaryAnninTheorem3.2} If $M[x^{-1}]_R$ is a completely
$\sigma$-compatible Bass module then
\[
\displaystyle {\rm Att} (M[x^{-1}]_S) = \left \{ P[x] \ | \ P\in {\rm Att}(M_R) \right \}.    
\]
\end{corollary}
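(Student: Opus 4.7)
The plan is to prove only the non-trivial inclusion $\subseteq$, since Theorem \ref{AnninTheorem2.1} already gives $\supseteq$. Given $I\in{\rm Att}(M[x^{-1}]_A)$, I fix a submodule $Q_A$ of $M[x^{-1}]_A$ such that $(M[x^{-1}]/Q)_A$ is coprime and $I={\rm ann}_A(M[x^{-1}]/Q)$. The overall strategy is to intersect with $R$, exhibit an attached prime $P$ of $M_R$ equal to $I\cap R$, and then bootstrap this equality to an equality of $A$-ideals $I=(I\cap R)A$.

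First I would pass to the $R$-module structure. By Proposition \ref{Completelycompatiblemodules} the quotient $(M[x^{-1}]/Q)_R$ is still completely $(\sigma,\delta)$-compatible, so Lemma \ref{AnninLemma2.4} shows it is coprime as an $R$-module. Because the Bass property transfers to quotients, I can pick a maximal submodule $P/Q$ of $M[x^{-1}]/Q$, and by coprimality of $(M[x^{-1}]/Q)_R$ I obtain $I\cap R={\rm ann}_R(M[x^{-1}]/Q)={\rm ann}_R(M[x^{-1}]/P)$. Now I would use Lemma \ref{AnninLemma2.5} to fix the least $k\in\mathbb{N}$ for which $\langle P_k\rangle$ is maximal in $M_R$, and pick $m_k\in M\setminus\langle P_k\rangle$, so that $m_kx^{-k}+P$ cyclically generates the simple module $(M[x^{-1}]/P)_R$. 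Defining $\varphi:(M/\langle P_k\rangle)_R\to(M[x^{-1}]/P)_R$ by $\varphi(m+\langle P_k\rangle):=mx^{-k}+P$, well-definedness requires that $mr\in\langle P_k\rangle$ force $mx^{-k}r\in P$; expanding $mx^{-k}r=\sum_{i=0}^{k}mf_k^{i}(r)x^{-i}$ via (\ref{relacion2}), the top term $m\sigma'^k(r)x^{-k}$ lies in $P$ thanks to complete $(\sigma,\delta)$-compatibility, while every term of lower $x^{-1}$-degree lies in $P$ by the minimality of $k$. Composing $\varphi$ with the canonical projection $M\twoheadrightarrow M/\langle P_k\rangle$ produces a surjection of $R$-modules onto the simple module $(M[x^{-1}]/P)_R$, and since annihilators of simple modules are coprime and attached primes pass through quotients, $I\cap R\in{\rm Att}(M_R)$.

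The main obstacle is the remaining equality $I=(I\cap R)A$. The inclusion $(I\cap R)A\subseteq I$ is clear, so I would take $f(x)=r_0+r_1x+\cdots+r_jx^j\in I$ and prove by induction on $i$ that $r_i\in I\cap R$. For $r_0$, acting on $m_kx^{-k}+P$ gives $m_kx^{-k}f(x)\in Q\subseteq P$; expanding this out using the $A$-action (\ref{eqn:(3.1)})--(\ref{eqn:(3.2)}), every monomial of degree strictly greater than $-k$ lies in $P$ by minimality of $k$, leaving $m_k\sigma'^k(r_0)x^{-k}\in P$, and complete $(\sigma,\delta)$-compatibility then yields $m_kx^{-k}r_0\in P$, whence $r_0\in I\cap R$ by simplicity of $(M[x^{-1}]/P)_R$. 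For the inductive step, after subtracting the already-identified terms I would act on $m_kx^{-(k+i+1)}$ instead, arranging $r_{i+1}$ to appear as the coefficient of $x^{-k}$ in the "top" position after applying the commutation law $x^{-1}r=\sigma'(r)x^{-1}+\delta'(r)$, and repeat the same minimality-of-$k$ argument to absorb all lower-degree contributions into $P$. The delicate point is bookkeeping the interaction of the maps $\sigma',\delta'$ with negative powers of $x$: one must argue carefully that every intermediate term produced by the commutation rule has $x^{-1}$-degree strictly less than $-k$ in absolute value and therefore sits inside $P$ by minimality, so that only the intended coefficient survives.
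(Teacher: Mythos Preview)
Your argument is correct and reproduces, step by step, the paper's proof of Theorem~\ref{AnninTheorem3.2}. The paper, however, gives no separate proof for this corollary: it is simply the specialization $\delta=0$ of Theorem~\ref{AnninTheorem3.2}, in which case $A=R(x;\sigma)=R[x;\sigma]=S$ by Example~\ref{ExampleskewOre}(1) and $PA=P[x]$, so the statement follows immediately without re-running the argument.
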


%If $M[x^{-1}]_R$ is completely $(\sigma,\delta)$-compatible, then so is $M_R$, since $M_R$ is a quotient module of $M[x^{-1}]_R$. Proposition \ref{AnninProposition3.4} provides a sufficient condition for the converse holds.

%\begin{proposition}\label{AnninProposition3.4} Let %$\sigma$ be an automorphism of $R$, $\delta$ a $\sigma$-derivation of $R$ and $N_R$ a right module. If $M_R$ is completely $(\sigma,\delta)$-compatible and contains an element $m \in M_R$ such that ${\rm ann}_R(m) \subseteq {\rm ann}_R(N)$, then both $N_R$ and $N[x^{-1}]_R$ are completely $(\sigma,\delta)$-compatible.    
%\end{proposition}

%\begin{proof}
 %   :)
%\end{proof}

%\begin{corollary}
%Let $\sigma$ be an automorphism of $R$ and $\delta$ a $\sigma$-derivation of $R$. If there is $m \in M_R$ with ${\rm ann}_R(m)=0$ or $M_R \cong (R/I)_R$ for some {\rm (}two-sided{\rm )} ideal $I$ of $R$, then $M_R$ is completely $(\sigma,\delta)$-compatible if and only if $M[x^{-1}]_R$ is completely $(\sigma,\delta)$-compatible.
%\end{corollary}

%\begin{proof}
%This follows from Proposition \ref{AnninProposition3.4} to the case $N = M$.
%\end{proof}

\section{Examples}
The importance of our results is appreciated when we extend their application to algebraic structures that are more general than those considered by Annin \cite{Annin2011}, that is, noncommutative rings which cannot be expressed as skew polynomial rings of endomorphism type. In this section, we consider families of algebras that have been studied in the literature to exemplify the results obtained in this paper.

\begin{example}
  Let $A$ be the Jordan plane $\mathcal{J}(\Bbbk)$ or the $q$-skew Jordan plane $\mathcal{J}_q(\Bbbk)$ with $q \in \Bbbk^*$ and $q\neq 1$. Consider the right module $M[y^{-1}]_{A}$ under the action defined by (\ref{eqn:(3.1)}) and (\ref{eqn:(3.2)}). If $M_{\Bbbk[x]}$ is a right module such that $M[y^{-1}]_{\Bbbk[x]}$ is a completely $(\sigma,\delta)$-compatible module, then the characterization of the attached prime ideals of $M[y^{-1}]_{A}$ is obtained from Theorems \ref{AnninTheorem2.1} and \ref{AnninTheorem3.2}.
\end{example}

\begin{example}
  Let $A$ be the {$q$-meromorphic Weyl algebra} $MW_q$ with $yx=qxy + x^2$, for $0 < q < 1$. If $M[x^{-1}]_{\mathbb{C}[y]}$ is completely $(\sigma,\delta)$-compatible where $\sigma(y):=q^{-1}y$ and $\delta(y):=-q^{-1}$, then the characterization of the attached primes of $M[x^{-1}]_{A}$ follows from Theorems \ref{AnninTheorem2.1} and \ref{AnninTheorem3.2}. Thinking about the change of variable presented by Lopes (Example \ref{ExampleskewOre} (4)), $MW_q$ can be interpreted as the quantum plane $\mathbb{C}_q[x,y]$ with $yx=qxy$. In this way, if $M[x^{-1}]_{\mathbb{C}[y]}$ is a completely $\sigma$-compatible module with $\sigma(y):=q^{-1}y$, then the description of the attached prime ideals over $M[x^{-1}]_{A}$ follows from Theorems \ref{AnninTheorem2.1} and \ref{AnninTheorem3.2} or Corollary \ref{CorollaryAnninTheorem3.2}.
\end{example}

\begin{example}
   If $A$ is the algebra of skew Ore polynomials of higher order $Q(0,b,c)$ subject to the relation $yx=bxy + cy^2$ where $b,c\in \Bbbk^{*}$ and $M_{\Bbbk[x]}$ is a right module which satisfies that $M[y^{-1}]_{\Bbbk[x]}$ is completely $(\sigma,\delta)$-compatible, then Theorems \ref{AnninTheorem2.1} and \ref{AnninTheorem3.2} described the attached prime ideals of $M[y^{-1}]_{A}$. In a similar way, we get the characterization of these ideals over the right module $M[x^{-1}]_{A}$ when $A$ is the algebra $Q(a,b,0)$.
\end{example}

%\begin{example}
  %Let $A$ be a skew Ore polynomial of higher order considered by Smits \cite{Smits1968} and subject to the commutation rule $xy^2:=y^2x +2yx^2 + x^3$. If $M_{\Bbbk[y^2]}$ is a right module and $M[x^{-1}]_{\Bbbk[y^2]}$ is a completely $(\sigma,\delta)$-compatible module, then the characterization of the attached prime ideals of $M[x^{-1}]_{A}$ follows from Theorems \ref{AnninTheorem2.1} and \ref{AnninTheorem3.2}.
%\end{example}

\begin{example} With the aim of constructing new Artin-Schelter regular algebras, Zhang and Zhang \cite{ZhangZhang2008} defined the {\em double Ore extensions} (or {\em double extensions}, for short) over a $\Bbbk$-algebra $R$ and presented $26$ new families of Artin-Schelter regular algebras of global dimension four. It is possible to find some similarities between the definition of double extensions and two-step iterated skew polynomial rings. Nevertheless, there exist no inclusions between the classes of all double extensions and of all length two iterated skew polynomial rings (c.f. \cite{Carvalhoetal2011}). Several researchers have investigated different relations of double extensions with Poisson, Hopf, Koszul and Calabi-Yau algebra (see \cite{RamirezReyes2024} and reference therein). We start by recalling the definition of a double extension in the sense of Zhang and Zhang, and since some typos occurred in their papers \cite[p. 2674]{ZhangZhang2008} and \cite[p. 379]{ZhangZhang2009} concerning the relations that the data of a double extension must satisfy, we follow the corrections presented by Carvalho et al. \cite{Carvalhoetal2011}.

   \begin{definition} [{\cite[Definition 1.3]{ZhangZhang2008}; \cite[Definition 1.1]{Carvalhoetal2011}}]\label{DefinitionDoubleExtension}

       If $B$ is a $\Bbbk$-algebra and $R$ is a subalgebra of $B$, then 
        \begin{itemize}
            \item[(a)] $B$ is called a {\em right double extension} of $R$ if the following conditions hold: 
        \begin{itemize}
        \item[\rm (i)] $B$ is generated by $R$ and two new variables $y_1$ and $y_2$.
        \item[\rm (ii)] $y_1$ and $y_2$ satisfy the relation
        \begin{equation}\label{eqn:aii}
        y_2y_1 = p_{12}y_1y_2 + p_{11}y_1^2 + \tau_1y_1 + \tau_2y_2 + \tau_0, 
        \end{equation}
        where $p_{12}, p_{11} \in \Bbbk$ and $\tau_1, \tau_2, \tau_0 \in R$.
        \item[\rm (iii)] $B$ is a free left $R$-module with a basis $\left \{y_1^{i}y_2^{j} \ | \ i,j \geq 0 \right \}$.
        \item[\rm (iv)] $y_1R + y_2R + R\subseteq Ry_1 + Ry_2 + R$.
      \end{itemize}
       \item[(b)] A right double extension $B$ of $R$ is called a {\em double extension} if
        \begin{itemize}
        \item[\rm (i)] $p_{12}\neq 0$.
        \item[\rm (iii)] $B$ is a free right $R$-module with a basis $\left \{y_2^{i}y_i^{j} \ | \ i,j \geq 0 \right \}$.
        \item[\rm (iv)] $y_1R + y_2R + R = Ry_1 + Ry_2 + R$.
      \end{itemize}
   \end{itemize}
   \end{definition}
    Condition (a)(iv) from Definition \ref{DefinitionDoubleExtension} is equivalent to the existence of two maps
\begin{center}
    $\sigma(r):=\begin{pmatrix} \sigma_{11}(r) & \sigma_{12}(r) \\ \sigma_{21}(r) & \sigma_{22}(r) \end{pmatrix}
    \ \text{and}\ \delta(r):= \begin{pmatrix} \delta_1(r)\\ \delta_2(r) \end{pmatrix}$ for all $r \in R$,
\end{center}
such that 
\begin{equation}\label{eqn:R2}
    \begin{pmatrix} y_1\\ y_2 \end{pmatrix}r:= \begin{pmatrix} y_1r\\ y_2r \end{pmatrix}=\begin{pmatrix} \sigma_{11}(r) & \sigma_{12}(r) \\ \sigma_{21}(r) & \sigma_{22}(r) \end{pmatrix} \begin{pmatrix} y_1\\ y_2 \end{pmatrix} + \begin{pmatrix} \delta_1(r)\\ \delta_2(r) \end{pmatrix}.
\end{equation}

 If $B$ is a right double extension of $R$ then we write $B := R_P [y_1, y_2; \sigma, \delta, \tau]$ where $P:=\{ p_{12}, p_{11} \}\subseteq \Bbbk$, $\tau:=\{\tau_1, \tau_2, \tau_0 \}\subseteq R$ and $\sigma,\delta$ are as above. The set $P$ is called a {\em parameter} and $\tau$ a {\em tail}. If $\delta:=0$ and $\tau$ consists of zero elements then the double extension is denoted by $R_P [y_1, y_2; \sigma]$ and is called a {\em trimmed double extension} \cite[Convention 1.6 (c)]{ZhangZhang2008}. It is straightforward to see that the relation (\ref{eqn:aii}) is given by 
\begin{equation}\label{eqn:RRR}
    y_2y_1 = p_{12}y_1y_2 + p_{11}y_1^2.
\end{equation}
Since $p_{12}, p_{11}\in \Bbbk$ the expression (\ref{eqn:RRR}) can be written as $y_1y_2= p_{12}^{-1}y_2y_1 - p_{12}^{-1}p_{11}y_1^2$. It is clear that $\sigma(y_2)=p_{12}^{-1}y_2$ is an automorphism of $\Bbbk[y_2]$ and $\delta(y_2)=- p_{12}^{-1}p_{11}$ is a locally nilpotent $\sigma$-derivation of $\Bbbk[y_2]$. In this way, the trimmed double extension $R_P [y_1, y_2; \sigma]$ can be seen as $A=\Bbbk[y_2](y_1;\sigma,\delta)$. If $M[y_2^{-1}]_{A}$ is a right module under the action given by  (\ref{eqn:(3.1)}) and (\ref{eqn:(3.2)}) and $M_{\Bbbk[y_2]}$ is a module such that $M[y_1^{-1}]_{\Bbbk[y_2]}$ is completely $(\sigma,\delta)$-compatible, then Theorems \ref{AnninTheorem2.1} and \ref{AnninTheorem3.2} describe the attached prime ideals of $M[y_2^{-1}]_{A}$.

\end{example}

\section{Future work}
As we mentioned in the introduction, Nordstrom \cite{Nordstrom2005, Nordstrom2012} studied the associated primes of simple torsion modules over generalized Weyl algebras which are $\mathbb{N}$-graded rings and contain the skew polynomial ring $R[x;\sigma]$ as a subring. As a possible future work, we have in mind to investigate the attached prime ideals of inverse polynomial modules over generalized Weyl algebras. 

Lezama and Latorre \cite{Lezamalatorre2017} introduced the {semi-graded rings} with the aim of generalizing $\mathbb{N}$-graded rings (such as the skew polynomial rings of endomorphism type and the generalized Weyl algebras), skew PBW extensions and many other algebras of interest in noncommutative algebraic geometry and noncommutative differential geometry. We think as future work to investigate the theory of the attached prime ideals and secondary representations over semi-graded rings.

\end{document}